%detectingBinomiality.tex
%%%%%%%%%%%%%%%%%%%%%%%%%%%%%%%%%%%%%%%%%%%%%%%%%%%%%%%%%%%%%%%%%%%%%%
%%%%%                                                            %%%%%
%%%%%  When is an ideal binomial after an ambient automorphism?  %%%%%
%%%%%                                                            %%%%%
%%%%%     Lukas Katth\"an, Mateusz Michalek, and Ezra Miller     %%%%%
%%%%%                                                            %%%%%
%%%%%%%%%%%%%%%%%%%%%%%%%%%%%%%%%%%%%%%%%%%%%%%%%%%%%%%%%%%%%%%%%%%%%%

\documentclass[12pt]{amsart}

\usepackage[text={440.8pt, 576pt}, centering]{geometry}

\usepackage{amssymb}
\usepackage{bm}
\usepackage{paralist}

\usepackage{xcolor} %for violet, looks nicer than the default magenta.
\usepackage[colorlinks=true, urlcolor=violet, citecolor=blue]{hyperref}
\usepackage[capitalize]{cleveref} %\cref

\newcommand{\excise}[1]{}%{$\star$\textsc{#1}$\star$}

% Theorem environments with italic font
\newtheorem{thm}{Theorem}[section]
\newtheorem{lemma}[thm]{Lemma}
\newtheorem{cor}[thm]{Corollary}
\newtheorem{prop}[thm]{Proposition}

\newtheorem{prob}{Problem}
\newtheorem*{claim}{Claim}

\theoremstyle{definition}
\newtheorem{example}[thm]{Example}
\newtheorem{remark}[thm]{Remark}
\newtheorem{defn}[thm]{Definition}

\newtheorem{alg}[thm]{Algorithm}
\newtheorem{rtne}[thm]{Routine}

%numberwithin{equation}{section}

%For numbered lists with arabic 1. 2. 3. numbering

\newcounter{separated}

%For algorithms
\newenvironment{alglist}%
    {\begin{list}
        {}
        {\leftmargin=4.8em\labelwidth=5.1em\labelsep=.2em
         \topsep=-1ex\itemsep=.1ex}\sf}  %for [11pt or 12pt]{amsart}
    {\vspace{1ex}\end{list}}
\newcommand\init[1]{\item[{\bf{#1}:{\ }}]}%\dotfill\textsc{#1}}
\newcommand\routine[1]{\item[{\sc{#1}{\ }}]}%\dotfill\textsc{#1}}
\newcommand\procedure[1]{{\sc{#1}}}
\newenvironment{initlist}[1]%
    {\init{#1}\begin{list}
        {}
%       {\leftmargin=2.9em\labelwidth=2.4em\labelsep=.5em
        {\leftmargin=0em\labelwidth=0em\labelsep=.5em
         \topsep=-1ex\itemsep=.1ex}}
    {\end{list}}
\newenvironment{routinelist}[1]%
    {\routine{#1}\begin{list}
        {}
%       {\leftmargin=2.9em\labelwidth=2.4em\labelsep=.5em
        {\leftmargin=0em\labelwidth=0em\labelsep=.5em
         \topsep=-1ex\itemsep=.1ex}}
    {\end{list}}
    {\begin{list}
        {}
        {\leftmargin=3.0em\labelwidth=4.8em\labelsep=.5em
         \itemsep=.1ex\topsep=0ex}}
    {\end{list}}

% % rubric for using {alg}, {rtne}, and {alglist}
% \begin{alg}\label{a:my-alg}
% (Computing source unfolding)
% \end{alg}
% \begin{alglist}
% \begin{routinelist}{input}
% \item[]input 1
% \item[]input 2
% \end{routinelist}
% \routine{output} what you want the algorithm to produce
% \begin{routinelist}{define}
% \item[]thingy number 1
% \item[]thingy number 2
% \item[]thingy number 3
% \end{routinelist}
% \begin{routinelist}{initialize}
% \item[]initial gizmo 1
% \item[]initial gizmo 2
% \end{routinelist}
% \routine{compute} the thing you want to compute
% \routine{while} some condition
% \begin{routinelist}{do}
%     \item[]first this thing
%     \item[]and then \procedure{subroutine that makes a good thing}
% \end{routinelist}
% \routine{end}{}\procedure{while-do}
% \begin{routinelist}{compute}
%     \item[]another thingy
%     \item[]and yet another
% \end{routinelist}
% \routine{return} the thing you wanted to compute
% \end{alglist}
% 
% \begin{rtne}\label{rtne:my-routine}
% (\procedure{subroutine that makes a good thing})
% \end{rtne}
% \begin{alglist}
% \routine{input} what you give it
% \routine{output} what you get from it
% \routine{compute} what you have it do
% \routine{} another thing you have it do
% \routine{find} a special something
% \routine{} another special something
% \routine{return} the good thing
% \end{alglist}

\newcommand{\ring}[1]{\ensuremath{\mathbb{#1}}}

%single characters, used in math mode
\renewcommand\>{\rangle}
\renewcommand\AA{\mathbb A}
\renewcommand\aa{\mathbf{a}}
\newcommand\<{\langle}

\newcommand\CC{\ring{C}}
\newcommand\II{\mathcal{I}}
\newcommand\NN{\ring{N}}
\newcommand\OO{\mathcal{O}}
\newcommand\PP{\ring{P}}

\newcommand\TT{A}
\newcommand\ZZ{\ring{Z}}
\newcommand\bb{\mathbf{b}}
\newcommand\cc{\mathbf{c}}
\newcommand\kk{\Bbbk}

\newcommand\pp{\mathfrak{p}}
\newcommand\xx{\mathbf{x}}

%roman font words for math mode
\newcommand\GL{\mathit{GL}}
\newcommand\NL{\mathit{NL}}
\renewcommand\th{\mathrm{th}}

%math symbols without arguments
\newcommand\too{\longrightarrow}
\newcommand\xxt{\mathbf{\tilde{x}}}
\newcommand\from{\leftarrow}
\newcommand\into{\hookrightarrow}

\newcommand\with{\mid}
\newcommand\minus{\smallsetminus}
\newcommand\nothing{\varnothing}

\renewcommand\implies{\Rightarrow}

%math symbols taking arguments
\newcommand\ol[1]{{\overline {#1}}}

\newcommand\wt[1]{{\widetilde {#1}}}

%Math operators
 % Annihilator
 % Associated Primes
\DeclareMathOperator{\Hom}{Hom} % Hom
\DeclareMathOperator{\num}{\sf num} % numerator of a set of fractions
 % saturation of a lattice
\DeclareMathOperator{\rank}{rank}
\DeclareMathOperator{\spec}{Spec}
\DeclareMathOperator{\Proj}{Proj}

\DeclareMathOperator{\Ende}{End}

\DeclareMathOperator{\ev}{ev} %evaluation
\DeclareMathOperator{\id}{id} %identity
 %Tor

%%%%%%%%%%%%%%%%%%%%%%%%%%%%%%%%%%%%%%%%%%%%%%%%%%%%%%%%%%%%%%%%%
%%%%%%%%%%%%%%%%%%%%%%%%%%%%%%%%%%%%%%%%%%%%%%%%%%%%%%%%%%%%%%%%%
\begin{document}%%%%%%%%%%%%%%%%%%%%%%%%%%%%%%%%%%%%%%%%%%%%%%%%%
%%%%%%%%%%%%%%%%%%%%%%%%%%%%%%%%%%%%%%%%%%%%%%%%%%%%%%%%%%%%%%%%%
%%%%%%%%%%%%%%%%%%%%%%%%%%%%%%%%%%%%%%%%%%%%%%%%%%%%%%%%%%%%%%%%%

\mbox{}\vspace{-3.8ex}
\title{When is a polynomial ideal binomial after an ambient automorphism?}

\author{Lukas Katth{\"a}n}
\address{Institut f\"ur Mathematik, Goethe-Universit\"at\\ Frankfurt, Germany} 
\email{katthaen@math.uni-frankfurt.de}

\author{Mateusz Micha\l{}ek}
\address{Institute of Mathematics of Polish Academy of Sciences, Warsaw, Poland and
\newline
\mbox{}\,\,\,\,$\!\!\!\!$\quad
Max Planck Institute for Mathematics in the Sciences, Leipzig, Germany}
\email{wajcha2@poczta.onet.pl}

\author{Ezra Miller}
\address{Department of Mathematics\\Duke University\\Durham, NC 27708}
\urladdr{\url{http://math.duke.edu/people/ezra-miller}}

\date{11 June 2017}

\subjclass[2010]{Primary: 14Q99, 13P99, 14L30, 13A50, 14M25, 68W30;
Secondary: 13F20, 14D06, 14L40}
% Primary:
% 14Q99  None of the above, but in section 14Qxx: Computational aspects
% 13P99  None of the above, but in section 13Pxx: Computational commutative algebra
% 14L30  Group actions on varieties or schemes
% 13A50  Actions of groups on commutative rings; invariant theory
% 14M25  Toric varieties, Newton polyhedra
% 68W30  Symbolic computation and algebraic computation
% Secondary:
% 13F20  Polynomial rings and ideals; rings of integer-valued polynomials
% 14D06  Fibrations, degeneration
% 14L40  Other algebraic groups (geometric aspects)

\keywords{ideal, polynomial ring, algorithm, group action, binomial,
toric variety, flat family, orbit, constructible set}

\begin{abstract}
Can an ideal~$I$ in a polynomial ring $\kk[\xx]$ over a field be moved
by a change of coordinates into a position where it is generated by
binomials $\xx^\aa - \lambda\xx^\bb$ with $\lambda \in \kk$, or by
unital binomials (i.e., with $\lambda = 0$ or~$1$)?  Can a variety be
moved into a position where it is toric?  By fibering the
$G$-translates of~$I$ over an algebraic group~$G$ acting on affine
space, these problems are special cases of questions about a
family~$\II$ of ideals over an arbitrary base~$B$.  The main results
in this general setting are algorithms to find the locus of points
in~$B$ over which the fiber of~$\II$
\begin{itemize}
\item%
is contained in the fiber of a second family~$\II'$ of ideals
over~$B$;
\item%
defines a variety of dimension at least~$d$;
\item%
is generated by binomials; or
\item%
is generated by unital binomials.
\end{itemize}
A faster containment algorithm is also presented when the fibers
of~$\II$ are prime.  The big-fiber algorithm is probabilistic but
likely faster than known deterministic ones.\linebreak
Applications include the setting where a second group~$T$ acts on
affine space, in addition to~$G$, in which case algorithms compute the
set of $G$-translates of~$I$
\begin{itemize}
\item%
whose stabilizer subgroups in~$T$ have maximal dimension; or
\item%
that admit a faithful multigrading by~$\ZZ^r$ of maximal rank~$r$.
\end{itemize}
Even with no ambient group action given, the final application is an
algorithm to
\begin{itemize}
\item%
decide whether a normal projective variety is abstractly toric.
\end{itemize}
All of these loci in~$B$ and subsets of~$G$ are constructible; in some
cases they are closed.
\end{abstract}

\maketitle

\vspace{-1.8ex}
%%%%%%%%%%%%%%%%%%%%%%%%%%%%%%%%%%%%%%%%%%%%%%%%%%%%%%%%%%%%%%%%%%%%%%
\section{Introduction}\label{s:intro}%%%%%%%%%%%%%%%%%%%%%%%%%%%%%%%%%
%%%%%%%%%%%%%%%%%%%%%%%%%%%%%%%%%%%%%%%%%%%%%%%%%%%%%%%%%%%%%%%%%%%%%%

\noindent
Ideals generated by binomials define schemes with much simpler
geometry than arbitrary polynomial ideals \cite{binomialideals},
largely yielding to analysis by combinatorial methods
\cite{primDecomp, mesoprimary, soccular}.  Similarly, ideals
homogeneous with respect to some grading or multigrading are simpler
than general ideals.  In principle, much of this simplicity persists
after linear change of coordinates, or any other automorphism of the
ambient affine space.  Therefore, it seems natural to ask for an
algorithm to decide whether a given ideal has any of these properties
after applying an ambient automorphism.

Here we present algorithms for this and related tasks.  Let $G$ be an
algebraic group acting on affine space $\AA^n =
\spec\kk[x_1,\dots,x_n]$ via a morphism $\alpha: G \times \AA^n \to
\AA^n$.  For a given ideal $I \subseteq \kk[\xx] =
\kk[x_1,\dots,x_n]$, we provide algorithms for the following tasks,
where $I.\gamma$ means the image of under the corresponding (right)
action of $\gamma \in G$ on~$\kk[\xx]$.
\begin{enumerate}[({\TT}1)]
\item\label{A:bin}%
Find the elements $\gamma \in G$ such that $I.\gamma$ is generated by
binomials (Algorithm~\ref{a:binomial}).

\item\label{A:unit}%
Find the elements $\gamma \in G$ such that $I.\gamma$ is unital,
meaning generated by monomials and differences of monomials
(Algorithm~\ref{a:checkUnital}).

\item\label{A:grad}%
Find the elements $\gamma \in G$ such that $I.\gamma$ admits a
faithful $\ZZ^r$-grading with $r$ as large as possible
(Algorithm~\ref{a:grading}).

\item\label{A:group}%
Given a second algebraic group $T$ which also acts on $\AA^n$, find
the elements $\gamma \in G$ such that $I.\gamma$ is stabilized by as
large a subgroup of $T$ as possible (Algorithm~\ref{a:group}).
\end{enumerate}\setcounter{separated}{\value{enumi}}
In each item, we can in particular determine whether there exists
$\gamma \in G$ such that $I.\gamma$ has the respective property.

The case of $G = \GL_n$ in~(A\ref{A:bin}) originated with Eisenbud and
Sturmfels \cite[page~6]{binomialideals}, who raised the issue of
determining when a given ideal is the image of a binomial ideal under
an ambient linear automorphism.

It would be desirable to have algorithms for these questions with $G$
being the entire automorphism group of affine space.  However, as this
group is currently a mystery \cite{polyAut, SU04}, it seems only fair
to require that $G$ be specified beforehand.  In fact, the reader will
lose none of the flavor or difficulty by assuming that $G = \GL_n$
consists of all linear changes of coordinates.

The most important class of binomial ideals consists of those that are
\emph{toric}, meaning unital and prime.  They define affine toric
varieties, so (A\ref{A:unit}) can be used to find automorphims in~$G$
under which the image of a given affine variety is (equivariantly
embedded as) a toric variety.  For varieties that are projective, we
can even do better and detect whether they are toric without having to
specify a group $G$ beforehand:
\begin{enumerate}[({\TT}1)]\setcounter{enumi}{\value{separated}}
\item\label{A:toric}%
Given a normal, projective variety $X \subseteq \PP^n$, decide if it
is toric (Algorithm~\ref{a:checkProjToric}).
\end{enumerate}\setcounter{separated}{\value{enumi}}
Even if it is not toric, taking the group~$T$ in~(A\ref{A:group}) to
be the algebraic torus~$(\kk^*)^n$, our method finds a large subtorus
acting on $V(I.\gamma)$, turning the latter optimally into a
$T$-variety.  (If your definition of $T$-variety requires normality,
then of course it can only work if $V(I)$ is normal to begin with.)

Here are two examples where ``hidden'' toric structures turned out to
be useful.

\begin{example}[Phylogenetics and group-based models]
Group-based models are special statistical models: maps from the
parameter space to the space of probability distributions
\cite{Phylogeneticalgebraicgeometry}.  In their original coordinates
the maps are not monomial, so the Zariski closures of the images do
not seem toric.  However, a clever linear change of coordinates, known
as the Discrete Fourier Transform, turns the varieties to
equivariantly embedded toric varieties \cite{hendy1989framework,
sturmfels2005toric}. This fact inspired numerous mathematicians both
in statistics and in algebraic geometry \cite{buczynska2007geometry,
draisma2009ideals, casanellas2011relevant, michalek2016finite}.
\end{example}

\begin{example}[Secant and tangential varieties of Segre--Veronese]
Secant and tangential varieties are classical topics in algebraic
geometry \cite{Zak}.  As an example of the difficulty of their
geometric and algebraic properties, finding the defining equations of
the secant variety of any Segre--Veronese variety was an open
conjecture of Garcia, Stillman, and Sturmfels
\cite{garcia2005algebraic}, solved only recently by Raicu
\cite{raicu2012secant}.  Thus, it is surprising that both secant and
tangential varieties of Segre--Veronese are covered by open toric
varieties---complements of hyperplane sections
\cite{sturmfels2013binary, MOZ, MPS}.  Here a nonlinear change of
coordinates, inspired by computation of cumulants in statistics
\cite{zwiernik2015semialgebraic} played a crucial role.
\end{example}

%%%%%%%%%%%%%%%%%%%%%%%%%%%%%%%%%%%%%%%%%%%%%%%%%%%%%%%%%%%%%%%%%%%%%%
\subsection*{Methods}\label{sub:methods}%%%%%%%%%%%%%%%%%%%%%%%%%%%%%%

The principle that guides our algorithms concerns the comparison of
families parametrized over a common base.  When $G$ acts on~$\AA^n$,
the $G$-translates of~$I$ fiber over~$G$ (Definition~\ref{d:orbit} and
Remark~\ref{r:orbit}).  If a second group~$T$ acts on~$\AA^n$ and it
is desired to find a subgroup of~$T$ that stabilizes a $G$-translate of
$X \subseteq \AA^n$, then ask for $\tau \in T$ and $\gamma \in G$ such
that $\gamma.X$ is stabilized by~$\tau$.  This problem fibers over $T
\times G$ (Section~\ref{sub:big-actions}), the point being to find the
locus $Y \subseteq T \times G$ over which $\tau.(\gamma.X) =
\gamma.X$.  In the context of~(A\ref{A:group}), where the goal is to
move~$X$ so as to make its stabilizing subgroup as large as possible,
the algorithm then finds the locus of points in~$G$ over which the
fiber of $Y \to G$ has maximal dimension.  This locus is closed
(Proposition~\ref{p:largegroup}).  Note that Algorithm~\ref{a:grading}
for~(A\ref{A:grad}) is the special case where $T = (\kk^*)^n$ is the
algebraic torus acting diagonally on~$\AA^n$
(Section~\ref{sub:multigradings}).

The upshot is that our computational engine consists of two algorithms
for a family of ideals over an arbitrary base~$B$: find the locus of
points in~$B$ over which the fiber
\begin{enumerate}[({\TT}1)]\setcounter{enumi}{\value{separated}}
\item\label{A:contain}%
is contained in the fiber of a second given family of ideals over~$B$
(Algorithm~\ref{a:equalfibers})~or
\item\label{A:dim}%
defines a variety of dimension at least~$d$ (Algorithm~\ref{a:dim}).
\end{enumerate}
These rely on geometry of constructible sets
(Section~\ref{sub:constructible}) and a bit of flatness, when it
is desired that the constructible outputs of the algorithms be closed
(Theorem~\ref{t:equal-locus2}).  Deterministic algorithms
for~(A\ref{A:dim}) are known \cite{kemper}, but Algorithm~\ref{a:dim}
is probabilistic and likely faster.

Having already the context of an arbitrary base~$B$ at our disposal,
the algorithms for~(A\ref{A:bin}) and~(A\ref{A:unit}) work as well for
a family of ideals over~$B$.
\begin{enumerate}[({\TT}1$'$)]
\item[(A\ref{A:bin}$'$)]%
Find the locus of points in~$B$ over which the fiber is binomial
(Algorithm~\ref{a:binomial}).

\item[(A\ref{A:unit}$'$)]%
Find the locus of points in~$B$ over which the fiber is unital
(Algorithm~\ref{a:checkUnital}).
\end{enumerate}
The reason is that the criterion for binomiality simply detects
whether the reduced Gr\"obner basis is binomial
(Section~\ref{sub:alg-binom}); it has nothing to do with a group
action on the ambient affine space.  Similarly, an ideal is unital
precisely when its scheme is closed under coordinatewise
multiplication (Proposition~\ref{p:monoidal}); while this employs the
monoid structure on~$\kk^n$, which is defined only once a basis of
$\kk^n$ has been given, that monoid action is fixed from the outset,
so it remains only to calculate which fibers respect it.

The final application, (A\ref{A:toric}), observes that high Veronese
embeddings of normal projective varieties are projectively normal,
after which an ambient automorphism must make the variety toric if
anything can (Theorem~\ref{t:checkProjToric}).  Thus our method
applies~(A\ref{A:unit}) with $G$ being the general linear group acting
on projective space.  A speedup (Algorithm~\ref{a:equalfibersIrreduc})
for~(A\ref{A:contain}) is available in this case because the fibers
are known to be prime.

%%%%%%%%%%%%%%%%%%%%%%%%%%%%%%%%%%%%%%%%%%%%%%%%%%%%%%%%%%%%%%%%%%%%%%
\subsection*{Conventions}%%%%%%%%%%%%%%%%%%%%%%%%%%%%%%%%%%%%%%%%%%%%%

Everything throughout the paper is over a field~$\kk$.  Implementation
of the algorithms would require that $\kk$ be ``computable'' in some
appropriate sense, but most of our discussions are independent of this
hypothesis.  That said, we do assume, without further comment, that
all algebras and schemes are of finite type over~$\kk$.

Functions, variables, and spaces are denoted by English letters.
Group elements or points in spaces are denoted by Greek letters.  Thus
$f(\xx)$ for $\xx = x_1,\dots,x_n$ is a function on a subscheme $X$
of affine $n$-space $\AA^n$ whose action on a $\kk$-valued point $\xi
\in X$ is $\xi \mapsto f(\xi)$.  Hopefully this eliminates confusion
regarding left vs.\ right actions on spaces vs.\ functions, the
details of which are covered in Section~\ref{sub:actions}.  When we
wish to think of points algebraically, as prime ideals in rings, then
we use Fraktur letters.  Thus $\pp_\gamma \subseteq \kk[G]$ is the
prime ideal corresponding to the point $\gamma \in G$.  For any prime
ideal~$\pp$, its residue field is written~$\kappa_\pp$.  If $\pp =
\pp_\xi$, say, then we also write $\kappa_\xi = \kappa_{\pp_\xi}$.

%%%%%%%%%%%%%%%%%%%%%%%%%%%%%%%%%%%%%%%%%%%%%%%%%%%%%%%%%%%%%%%%%%%%%%
\subsection*{Acknowledgments}%%%%%%%%%%%%%%%%%%%%%%%%%%%%%%%%%%%%%%%%%

We are grateful to the Mathematical Society of Japan for hosting its
8th Seasonal Institute in Osaka, ``Current trends on Gr\"obner bases''
and inviting two of us (MM and EM) in July, 2015; this work originated
with questions raised there in discussions with Thomas Kahle.  We
thank the Italian Istituto Nazionale di Alta Matematica for sponsoring
the meeting on ``Homological and computational methods in commutative
algebra'' in honor of Winfried Bruns's 70th birthday in Cortona, June
2016; that remains the sole instance when all three of us were
physically in the same location.  We are grateful to the European
Mathematical Society and the Foundation Compositio Mathematica for
supporting the 24th National School on Algebra and EMS Summer School
on Multigraded Algebra and Applications, where two of us (LK and EM)
had extended discussions.  Further, we thank the Research Institute
for Mathematical Sciences (RIMS) and the Kyoto University for hosting
the meeting on ``Computational Commutative Algebra and Convex
Polytopes'' in August 2016, which again gave two of us (LK and MM) the
opportunity for discussions.  EM is grateful to the Max Planck
Institute f\"ur Mathematik in den Naturwissenschaften in Leipzig,
Germany for funding a stay that largely pushed this work to
conclusion.  MM was supported by Polish National Science Centre grant
no.\thinspace{}UMO-2016/22/E/ST1/00574 and the Foundation for Polish
Science (FNP). LK was funded by the German Research Foundation (DFG),
grant no.\thinspace{}KA 4128/2-1.

%%%%%%%%%%%%%%%%%%%%%%%%%%%%%%%%%%%%%%%%%%%%%%%%%%%%%%%%%%%%%%%%%%%%%%
\section{Algorithms for families of schemes}\label{s:algs}%%%%%%%%%%%%
%%%%%%%%%%%%%%%%%%%%%%%%%%%%%%%%%%%%%%%%%%%%%%%%%%%%%%%%%%%%%%%%%%%%%%

Generally speaking, our algorithms are aimed at schemes over groups,
thought of as families of schemes (or ideals) parametrized by the
group.  But many of our results hold over more arbitrary base schemes;
we phrase those in terms of a commutative $\kk$-algebra~$S$.  The
polynomial ring $S[\xx]$, where $\xx = x_1,\dots,x_n$ denotes the
sequence of variables, has spectrum $\AA^n_S = \AA^n \times \spec S$,
the affine space of dimension~$n$ over (the spectrum of)~$S$.  An
ideal $J \subseteq S[\xx]$ corresponds to a subscheme $X \subseteq
\AA^n_S$, usually thought of as a family of subschemes of~$\AA^n =
\AA^n_\kk$ parametrized by (the $\kk$-valued points of)~$\spec S$, or
as a family of ideals of~$\kk[\xx]$.  But if $\pp$ is any prime ideal
of~$S$, maximal or not, then the ideal defining the fiber $X_\pp
\subseteq \AA^n_\pp$ over~$\pp$ is a specialization of~$J$, namely the
extension $J\kappa_\pp[\xx]$ of~$J$ to the polynomial ring over the
residue field $\kappa_\pp = S_\pp/\pp S_\pp$~of~$\pp$.

%%%%%%%%%%%%%%%%%%%%%%%%%%%%%%%%%%%%%%%%%%%%%%%%%%%%%%%%%%%%%%%%%%%%%%
\subsection{Constructible sets}\label{sub:constructible}%%%%%%%%%%%%%%

Let $S$ be a commutative $\kk$-algebra.  A subset of $\spec S$ is
\emph{constructible} if it is a finite union
$$%
  \bigcup_{i = 1}^s U_i \cap C_i,
$$
where each $U_i \subseteq \spec S$ is open and each $C_i \subseteq
\spec S$ is closed.  We assume access to algorithms that
\begin{enumerate}
\item%
compute unions and intersections of constructible sets, and

\item%
determine whether a constructible set is empty.
\end{enumerate}
See \cite{brunat-montes2016}, for example.

The following is used in the proof of Theorem~\ref{t:equal-locus2}.

\begin{lemma}\label{l:linal}
Let $S$ be a commutative $\kk$-algebra.  For any $n \times m$-matrix
$A$ with entries in~$S$ and column vector $b \in S^m$, the set of
primes $\pp \subseteq \spec S$ such that the system $A x = b$ of
linear equations has a solution over the residue field $\kappa_\pp$ is
constructible in $\spec S$.  If $A$ has the same rank over every prime
$\pp \in \spec S$, then this set is closed.
\end{lemma}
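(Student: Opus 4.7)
The plan is to reduce solvability of $Ax=b$ over a residue field to the Rouch\'e--Capelli criterion and then to express the resulting conditions in terms of vanishing of minors, which cut out closed subsets of $\spec S$.

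First I would recall that a linear system $Ax=b$ over a field $F$ has a solution if and only if $\rank_F A = \rank_F [A\mid b]$, where $[A\mid b]$ denotes the augmented matrix. Since $\rank_F [A\mid b] \geq \rank_F A$ always, the condition is equivalent to $\rank_F [A\mid b] \leq \rank_F A$. Applied to $F = \kappa_\pp$, the locus we want to describe is
\[
  L \;=\; \bigl\{\pp \in \spec S : \rank_{\kappa_\pp}[A\mid b] \leq \rank_{\kappa_\pp} A\bigr\}.
\]

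Next I would use the standard observation that for any matrix $M$ with entries in $S$ and any integer $r \geq 0$, the rank of $M$ over $\kappa_\pp$ is at least $r$ if and only if some $r \times r$ minor of $M$ lies outside~$\pp$. Thus the set $\{\pp : \rank_{\kappa_\pp} M \geq r\}$ is the complement of $V(I_r(M))$, where $I_r(M)$ is the ideal of $r \times r$ minors of $M$; in particular it is open. Consequently each set $\{\pp : \rank_{\kappa_\pp} M = r\} = V(I_{r+1}(M)) \setminus V(I_r(M))$ is locally closed.

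For the constructibility statement, I would write
\[
  L \;=\; \bigcup_{r=0}^{\min(n,m)} \bigl\{\pp : \rank_{\kappa_\pp} A = r\bigr\} \cap \bigl\{\pp : \rank_{\kappa_\pp}[A\mid b] \leq r\bigr\},
\]
which is a finite union of intersections of a locally closed set with a closed set, hence constructible. For the closed statement, suppose $\rank_{\kappa_\pp} A = r$ is independent of~$\pp$. Then $L$ simplifies to $\{\pp : \rank_{\kappa_\pp}[A\mid b] \leq r\} = V(I_{r+1}([A\mid b]))$, which is closed in $\spec S$. There is no real obstacle here beyond checking that the Rouch\'e--Capelli criterion and the minor characterization of rank transport cleanly across the specialization $S \twoheadrightarrow \kappa_\pp$, which they do because passing to a quotient of~$S$ commutes with computing minors.
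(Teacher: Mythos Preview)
Your proof is correct and follows essentially the same route as the paper: both invoke the Rouch\'e--Capelli criterion to reduce solvability to a rank comparison between $A$ and the augmented matrix $[A\mid b]$, stratify by rank using determinantal loci, and obtain the closedness statement in the constant-rank case by observing that the locus reduces to a single closed rank condition on $[A\mid b]$. The only cosmetic difference is how the union is indexed (the paper writes $\bigcup_i U_i' \setminus U_{i-1}$ with $U_i = \{\rank A \leq i\}$ and $U_i' = \{\rank [A\mid b] \leq i\}$, which unwinds to your decomposition).
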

\begin{proof}
Let $A'$ be the matrix obtained by appending the column $b$ to $A$.
For fixed $\pp$, the equation $A x = b$ is solvable over $\kappa_\pp$
if and only if $A'$ has the same rank as $A$ over~$\kappa_\pp$.
	
Let $U_i$ and $U_i'$ be the subsets of $\spec S$ where $A$ and~$A'$
have rank at most~$i$, respectively.  As $\rank A' \geq \rank A$ in
any case, the set of primes where the system is~solvable~is
$$%
  \bigcup_i U_i' \minus U_{i-1}.
$$
Each $U_i$ and $U_i'$ is closed, being the zero set of some minors, so
this set is~\mbox{constructible}.
	
Finally, if the rank of~$A$ is constant, say $\rank(A) = r$, then the
system is solvable exactly over~$U'_r$, which is closed.
\end{proof}

\begin{example}\label{e:linear-not-closed}
The linear equation $t \cdot x = 1$ over $S = \kk[t]$ is solvable if
and only if $t \neq 0$, so this locus need not be closed when the rank
of~$A$ in Lemma~\ref{l:linal} varies.
\end{example}

For the sake of completeness we recall an algebraic version of Baire's
theorem needed in the proof of Lemma~\ref{l:construct}, which is in
turn used in the proof of Proposition~\ref{p:finiteunion}.

\begin{thm}[Baire's Theorem]\label{t:Baire}
No irreducible scheme~$X$ of finite type over an uncountable
field~$\kk$ is, as a topological space, a countable union of closed
proper subsets.
\end{thm}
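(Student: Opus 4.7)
My plan is the classical Baire strategy: reduce to $X = \AA^d_\kk$ via Noether normalization, then induct on~$d$, using uncountability of~$\kk$ at each step through a generic hyperplane slice. For the reduction, pick any nonempty affine open $U \subseteq X$, which exists since $X$ is of finite type. If $X = \bigcup_i Z_i$ with each $Z_i \subsetneq X$ closed, then $U = \bigcup_i (U \cap Z_i)$ with each $U \cap Z_i$ closed and proper in~$U$, because $U \subseteq Z_i$ would force $X = \overline U \subseteq Z_i$ by irreducibility of~$X$. So I may assume $X$ is affine. Noether normalization then supplies a finite surjective morphism $\pi : X \to \AA^d_\kk$ with $d = \dim X$; each $\pi(Z_i)$ is closed in $\AA^d_\kk$ (finiteness of $\pi$) and of dimension $\dim Z_i < d$, hence proper, so $\AA^d_\kk = \bigcup_i \pi(Z_i)$ is a decomposition of the same form. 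It therefore suffices to treat $X = \AA^d_\kk$.

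I would induct on~$d$, the case $d = 0$ being trivial since $\spec\kk$ has no nonempty proper closed subset. For $d \geq 1$, write $Z_i = V(\mathfrak{a}_i)$ and pick a nonzero $f_i \in \mathfrak{a}_i$. Consider the hyperplanes $H_\lambda = V(x_d - \lambda) \cong \AA^{d-1}_\kk$ for $\lambda \in \kk$. For each~$i$, the set of $\lambda$ with $H_\lambda \subseteq V(f_i)$, equivalently with $(x_d - \lambda) \mid f_i$, is finite, bounded by $\deg_{x_d} f_i$. The union of these finite sets over the countably many~$i$ is countable, so uncountability of~$\kk$ produces some $\lambda$ avoiding all of them. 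For that $\lambda$, each $H_\lambda \cap Z_i$ is a proper closed subset of $H_\lambda$, yet $H_\lambda = \bigcup_i (H_\lambda \cap Z_i)$, contradicting the inductive hypothesis applied to $\AA^{d-1}_\kk$.

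The main obstacle is the inductive step: selecting a single $\lambda$ that simultaneously cuts every $Z_i$ into a proper closed subset of $H_\lambda$. This is exactly where the countability of the collection $\{Z_i\}$ and the uncountability of~$\kk$ are essential; the rest of the argument uses only standard properties of Noether normalization and finite morphisms. (One may alternatively observe that the generic point of the irreducible scheme $X$ lies in no proper closed subset, giving an even quicker proof that makes the countability and uncountability hypotheses redundant; I expect, however, that the hypothesis-heavy version above is the one the authors will invoke in Lemma~\ref{l:construct} to locate actual closed points outside a countable union.)
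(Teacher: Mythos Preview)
Your proof is correct and follows essentially the same route as the paper's: reduce to the affine case, Noether-normalize to $\AA^d_\kk$, then induct on~$d$ by slicing with a hyperplane $V(x_d - \lambda)$ for a $\lambda$ avoiding countably many bad values. The paper's version differs only cosmetically---base case $n=1$ rather than $d=0$, and the avoidance is phrased via the irreducible components of the~$Z_i$ rather than a single chosen $f_i \in \mathfrak a_i$---and your closing parenthetical about the generic point is apt.
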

\begin{proof}
As only the topology is in play, there is no harm in assuming that all
of the schemes involved are reduced.  Intersecting each subset in such
a union $\bigcup_{i=1}^\infty Z_i$ with the members of an affine open
cover reduces to the case where $X$ is affine.  Hence $X$ is a closed
subscheme of~$\AA^n$, and its coordinate ring is a domain because it
is reduced and irreducible.  Noether normalization therefore reduces
to the case $X = \AA^n$, because it guarantees a finite surjective
morphism to~$\AA^{\dim X}$ while preserving the fact that each $Z_i$
is a closed proper subset.

The goal is to show that $\bigcup_{i=1}^\infty Z_i$ is a proper subset
of~$\AA^n$ given that each~$Z_i$ is a proper closed subset.  The proof
is by induction on~$n$, the case $n = 1$ following from the
uncountability of~$\kk$.  Let $H$ be a hyperplane that contains none
of the (countably many) irreducible components of the~$Z_i$; such
an~$H$ exists---simplest is to choose it parallel to some given
hyperplane---because $\kk$ is uncountable.  The induction is concluded
by noting that $H \cap Z_i$ is a proper closed subscheme of~$H$ for
all~$i$, so $H \neq \bigcup_{i=1}^\infty H \cap Z_i$.
\end{proof}

\begin{lemma}\label{l:construct}
If the field~$\kk$ is uncountable, then no countably infinite union of
disjoint nonempty constructible sets is constructible.
\end{lemma}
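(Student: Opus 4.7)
The plan is to argue by contradiction: suppose $C := \bigsqcup_{i \ge 1} C_i$ is a constructible set expressed as a countably infinite disjoint union of nonempty constructibles, and derive a contradiction by strong induction on $d := \dim \overline{C}$. The base case $d = 0$ is immediate because a $\kk$-scheme of finite type of dimension zero is topologically a finite set of points, so $\overline{C}$ cannot contain infinitely many pairwise disjoint nonempty subsets.

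For the inductive step ($d \ge 1$), the idea is to excise a well-chosen open subset from every top-dimensional irreducible component of $\overline{C}$ simultaneously, leaving behind a constructible set whose closure has strictly smaller dimension but still contains infinitely many of the~$C_i$ untouched. Let $Z_1, \ldots, Z_s$ be the irreducible components of $\overline{C}$ of dimension $d$ and set $V_j := Z_j \setminus \bigcup_{k \ne j} Z_k$, the ``honest'' nonempty open part of $Z_j$ inside $\overline{C}$. Two standard facts drive the construction: (a)~since $C$ is dense in $\overline{C}$ and $V_j$ is open dense in the irreducible $Z_j$, the intersection $V_j \cap C$ is constructible and dense in $Z_j$; and (b)~a dense constructible subset of an irreducible Noetherian space contains a nonempty open subset of that space.

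Fact~(b) applied to $V_j \cap C \subseteq Z_j$ yields a nonempty open $U_j$ of $Z_j$ with $U_j \subseteq V_j \cap C$. Since $U_j$ is an irreducible $\kk$-scheme of finite type and decomposes as $U_j = \bigsqcup_i (U_j \cap C_i)$, writing this as $U_j = \bigcup_i \overline{U_j \cap C_i}^{U_j}$ (closures inside $U_j$) and invoking Baire's theorem (Theorem~\ref{t:Baire}) forces $U_j \cap C_{i_j}$ to be dense in $U_j$ for some index $i_j$; applying~(b) once more produces a nonempty open $W_j \subseteq U_j \subseteq V_j$ contained entirely in $C_{i_j}$. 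Setting $W := \bigcup_{j=1}^s W_j$ and $C' := C \setminus W$, the plan is to verify three things: that $W$ is open in $\overline{C}$ (so that $C'$ is constructible); that $\overline{C'} \subseteq \overline{C} \setminus W = \bigcup_{j \le s}(Z_j \setminus W_j) \cup \bigcup_{j > s} Z_j$, each summand of dimension strictly less than $d$, so that $\dim \overline{C'} < d$; and that for every $i \notin \{i_1, \ldots, i_s\}$ the pairwise disjointness of the~$C_i$ gives $C_i \cap W = \emptyset$, so that infinitely many of the~$C_i$ remain untouched in $C' = \bigsqcup_i (C_i \setminus W)$. The inductive hypothesis applied to~$C'$ then yields the contradiction.

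The principal obstacle is securing the strict drop in dimension: excising an open from a single $Z_j$ is insufficient, because other top-dimensional components would survive in $\overline{C'}$. The trick is to excise from all $Z_j$ at once and to place each $W_j$ inside the pairwise disjoint opens~$V_j$, which ensures that removing $W_j$ does not leak into a different component $Z_k$ and hence eliminates every $d$-dimensional piece of $\overline{C}$ in one stroke.
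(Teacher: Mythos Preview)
Your proof is correct. Both your argument and the paper's use induction on $\dim\overline C$ together with Baire's theorem, but the organization differs. The paper starts from a decomposition $U = \bigcup_j (C_j\cap O_j)$ of the alleged constructible set into locally closed pieces with $C_j$ irreducible, picks one piece $C_1\cap O_1$ meeting infinitely many of the $Z_i$, and applies Baire to~$C_1$ directly; the contradiction is then drawn from disjointness. You instead work with the top-dimensional irreducible components $Z_1,\dots,Z_s$ of $\overline C$, use Baire (plus the ``dense constructible contains an open'' fact) on each $Z_j$ to carve out an open $W_j\subseteq C_{i_j}$, excise $W=\bigcup_j W_j$ to force a strict drop in dimension, and then invoke the inductive hypothesis on $C\setminus W$.

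What each buys: the paper's route is shorter---one irreducible piece suffices---though as written it is a bit brisk (Baire literally yields only $\overline{Z_i\cap C_1}=C_1$, not $Z_i\cap C_1=C_1$, and the inductive hypothesis is announced but not visibly invoked). Your route is longer but entirely explicit: the passage from ``dense constructible'' to ``contains an open'' is spelled out, the excision is engineered so that \emph{every} top-dimensional component loses a nonempty open, and the induction is used in a transparent way. Your insistence on treating all $Z_1,\dots,Z_s$ simultaneously, with the $W_j$ placed inside the disjoint opens $V_j$, is exactly what makes the dimension drop clean; the paper avoids this by never needing $\overline C$ to shrink globally, only needing a contradiction inside a single~$C_1$.
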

\begin{proof}
Let $U = \bigcup_{i=1}^\infty Z_i$ be the union of disjoint nonempty
constructible sets~$Z_i$.  The proof is inductive on $\dim \ol U$, the
case $\dim \ol U = 0$ being trivial.

For contradiction, suppose $U$ is a finite union of intersections $C_j
\cap O_j$ of closed irreducible sets~$C_j$ and open sets~$O_j$.  One
of the intersections, say $C_1 \cap O_1$, must intersect infinitely
many~$Z_i$ nontrivially.  Let $Z_i' := Z_i \cap C_1$.  Then \mbox{$C_1
= (C_1 \minus O_1) \cup \bigcup_{i=1}^\infty Z_i'$}.
Theorem~\ref{t:Baire} implies that one of the sets~$Z_i'$ must
equal~$C_1$.  But this contradicts the hypothesis that the $Z_i$ are
disjoint and infinitely many of them intersect~$C_1$.
\end{proof}

%%%%%%%%%%%%%%%%%%%%%%%%%%%%%%%%%%%%%%%%%%%%%%%%%%%%%%%%%%%%%%%%%%%%%%
\subsection{The locus of fiber containment}\label{s:locus}%%%%%%%%%%%%

For two schemes fibered over a fixed base scheme~$B$, the methods in
later sections rely on an ability to compute the locus of points
in~$B$ where the fibers of the first scheme are contained in the
fibers of the second one.  In an affine setting, this locus is
described by the Theorem~\ref{t:equal-locus2}.  For terminology, a
ring is \emph{connected} if it has no nontrivial idempotents---or
equivalently, if its spectrum is connected.  For a polynomial $f$ of
total degree~$d$ in variables~$\xx = x_1,\dots,x_n$, its
\emph{homogenization} is the homogeneous polynomial
$$%
  \wt f = x_0^d f\Big(\frac{x_1}{x_0},\dots,\frac{x_n}{x_0}\Big)
$$
of degree~$d$ in $\xxt = x_0,\dots,x_n$ that yields $f$ when $x_0$ is
set equal to~$1$.  The \emph{homogenization} of an ideal~$I$ in a
polynomial ring with variables~$\xx$ is the ideal $\wt I$ in the
polynomial ring in $\xxt$ generated by $\{\wt f \with f \in I\}$.

\begin{thm}\label{t:equal-locus2}
Let $S$ be a connected commutative $\kk$-algebra.  If $I_1, I_2
\subseteq S[\xx]$ are two ideals, then the set of points $\{\pp \in
\spec S \with I_1\kappa_\pp[\xx] \subseteq I_2\kappa_\pp[\xx]\}$ is
constructible in $\spec S$.  If the quotient $S[\xxt]/\wt{I}_2$ by the
homogenization $\wt I_2$ is flat over $S$, then this set is closed.
\end{thm}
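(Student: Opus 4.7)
The plan is to translate fiber containment into a linear system over $\kappa_\pp$ and then apply Lemma~\ref{l:linal}. First I would observe that $I_1\kappa_\pp[\xx] \subseteq I_2\kappa_\pp[\xx]$ if and only if each of the finitely many generators $f_1,\dots,f_k$ of $I_1$ lies in $I_2\kappa_\pp[\xx]$. Since finite intersections of constructible (resp.\ closed) sets are constructible (resp.\ closed), the problem reduces to detecting, for a single $f \in S[\xx]$, the locus where $f \in I_2\kappa_\pp[\xx]$.

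Fixing generators $g_1,\dots,g_m$ of $I_2$, membership $f \in I_2\kappa_\pp[\xx]$ is the solvability over $\kappa_\pp$ of $f = \sum_i c_i g_i$ with $c_i \in \kappa_\pp[\xx]$. Once a uniform bound $D$ on $\deg c_i$ is available for every $\pp$, this becomes a matrix equation $Ax = b$ whose entries lie in~$S$, and Lemma~\ref{l:linal} yields constructibility immediately. I would produce such a bound by passing to the homogenizations in $S[\xxt]$: since $f \in I \iff \wt f \in \wt I$ over any field, the condition is recast as $\wt f$ lying in the homogenization of $I_2\kappa_\pp[\xx]$, a question purely in the fixed graded degree $d = \deg f$. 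Without flatness, one can stratify $\spec S$ into finitely many constructible pieces via a comprehensive Gr\"obner basis of $I_2$ over $S[\xx]$, on each of which cofactor degrees are uniformly bounded; the full locus is then a finite union of constructible sets.

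For closedness under flatness of $S[\xxt]/\wt{I_2}$ over~$S$, each graded component $(\wt{I_2})_e$ is a projective $S$-module of constant rank, and one checks that homogenization commutes with base change, so the homogenization of $I_2\kappa_\pp[\xx]$ equals the image of $\wt{I_2}$ in $\kappa_\pp[\xxt]$. The matrix encoding the degree-$d$ membership question thus has constant rank in every fiber, and the second part of Lemma~\ref{l:linal} upgrades constructibility to closedness. The main obstacle I anticipate is establishing the uniform degree bound in general and, in the flat case, pinning down the identification of the homogenization of the fiber with the fiber of the homogenization; once these are in hand, the argument reduces to the rank computation encoded by Lemma~\ref{l:linal}.
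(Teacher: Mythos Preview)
Your overall plan---reduce to a single generator $f$, encode membership as a linear system, and invoke Lemma~\ref{l:linal}---is the paper's, and your closedness argument via constant rank under flatness is essentially identical to the paper's. The divergence is in how you obtain the linear system for constructibility. You test whether $\wt f$ lies in $\widetilde{I_2\kappa_\pp[\xx]}$, the homogenization of the \emph{fiber}, and since that ideal varies with~$\pp$ you fall back on a comprehensive-Gr\"obner stratification to bound cofactor degrees stratum by stratum. The paper instead tests whether $\wt f$ lies in $\wt I_2\kappa_\pp[\xxt]$, the \emph{fiber of the global homogenization} $\wt I_2 \subseteq S[\xxt]$, asserting this is equivalent to $f \in I_2\kappa_\pp[\xx]$. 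Because $\wt I_2$ has fixed homogeneous generators $q_1,\dots,q_r$ over~$S$, the degree-$d$ membership question becomes a single linear system with entries in~$S$, and one application of Lemma~\ref{l:linal} yields constructibility with no stratification. Your route via stratification is heavier but workable; the paper's shortcut is to homogenize once over~$S$ rather than fiberwise, so that the base-change issue you flag as the ``main obstacle'' is absorbed into the single observation that $f \in I_2\kappa_\pp[\xx] \iff \wt f \in \wt I_2\kappa_\pp[\xxt]$.
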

\begin{proof}
Let $f_1, \dots, f_r$ generate~$I_1$.  Then $I_1\kappa_\pp \subseteq
I_2\kappa_\pp$ if and only if $I_1 S_\pp[\xx] + \pp S_\pp[\xx]
\subseteq I_2 S_\pp[\xx] + \pp S_\pp[\xx]$, and this is in turn
equivalent to $f_i \in I_2 S_\pp[\xx] + \pp S_\pp[\xx]$ for all $i$.
Therefore it suffices to treat the case where $I_1$ is principal, say
$I_1 = \<f\>$.

Observe that $f \in I_2S_\pp[\xx] + \pp S_\pp[\xx]$ if and only if
$\wt f \in \wt I_2 S_\pp[\xxt] + \pp S_\pp[\xxt]$.
% no need to saturate with $x_0$, because $x_0$ is a nonzero-divisor
% modulo $\wt I_2$ anyway \cite[Proposition~8.26]{cca}.
Suppose that $\wt I_2$ is generated by the homogeneous polynomials
$q_1, \dots, q_r \in S[\xxt]$ with degrees $d_1,\dots,d_r$, and let $d
= \deg f$.  Consider the ansatz:
\begin{equation}\label{eq:formal2}
  \wt f = \sum_{i=1}^r \sum_{\substack{\aa \in \NN^{n+1}
\\
  |\aa| = d - d_i}} c_{i, \aa} \xx^\aa q_i.
\end{equation}
The coefficients $c_{i, \aa}$ are considered as new unknowns, so
\Cref{eq:formal2} can be regarded as a system of linear equations for
the $c_{i, \aa}$ with coefficients in $S$.  By construction, $\wt f
\in \wt I_2 S_\pp[\xxt] + \pp S_\pp[\xxt]$ if and only if this system
has a solution modulo~$\pp$.  Hence, the desired constructibility
follows from Lemma~\ref{l:linal}.
	
Now assume that $S[\xxt]/\wt{I}_2$ is flat over $S$. Then the Hilbert
function of $S[\xxt]/\wt{I}_2 \otimes_S \kappa_\pp =
\kappa_\pp[\xxt]/\wt I_2 \kappa_\pp$ is locally constant as a function
of~$\pp$ \cite[Ex 20.14]{Eis}, so it does not depend on~$\pp$ at all
because $S$ is connected.  The short exact sequence
$$%
  0 \too \wt I_2\kappa_\pp \too \kappa_\pp[\xxt] \too
  \kappa_\pp[\xxt]/\wt I_2 \kappa_\pp \too 0
$$
implies that the Hilbert function of $\wt{I}_2 \kappa_\pp$ also does not
depend on~$\pp$.  But the value of this Hilbert function in degree~$d$
is the rank of the matrix on right-hand side of \Cref{eq:formal2}
at~$\pp$.  Hence the closedness claim follows from the last part of
Lemma~\ref{l:linal}.
\end{proof}

\begin{defn}\label{d:locus}
The constructible set in Theorem~\ref{t:equal-locus2} is the
\emph{containment locus} for~$I_1$ in~$I_2$ (or
% geometrically,
for~$X_2$ in~$X_1$, where $I_i = I(X_i)$).  Its intersection with the
containment locus for $I_2$ in~$I_1$ is the \emph{coincidence locus}
of~$I_1$ and~$I_2$ (or of~$X_1$ and~$X_2$).
\end{defn}

\begin{example}
The loci considered in Theorem~\ref{t:equal-locus2} need not be
closed.  Indeed, let $S = \kk[s,t]$ and $I_1, I_2 \subseteq S[x]$ be
defined by $I_1 = \<x\>$ and $I_2 = \<sx - t\>$.  Geometrically,
$V(I_1)$ is just the $st$-plane, while $V(I_2)$ is the affine part of
a blow-up of this plane at the origin.  The fibers coincide exactly
when $t = 0$ and $s \neq 0$.
\end{example}

\begin{example}\label{e:non-flat-homogenization}
The flatness hypothesis in Theorem~\ref{t:equal-locus2} is on $\wt
I_2$ and not simply on~$I_2$ because flatness of $S[\xx]/I$
over~$S$ does not imply that the homogenization~$S[\xxt]/\wt{I}$ is flat
over~$S$.  Take $S = \kk[a,b]$ and $I = \<ax - 1, by - 1\> \subseteq
S[x,y]$.  Then $S[x,y]/I \cong S[a^{-1},b^{-1}]$ is flat over~$S$
because it is a localization of~$S$.  But the homogenization of~$I$ is
$\wt I = \<ax - z, by - z\>$, and $S[x,y,z]/\wt I = \kk[a,b,x,y]/\<ax
- by\>$, which fails to be flat over the $ab$-plane for the same
reason that $(a,b)$ fails to be a regular sequence.
\end{example}

We now present an algorithmic version of Theorem~\ref{t:equal-locus2}.

\begin{alg}\label{a:equalfibers}
Compute the containment locus for two families over same base
\end{alg}
\begin{alglist}
\init{Input} ideals $I_1,I_2 \subseteq S[\xx]$ of families over a
	commutative $\kk$-algebra~$S$

\init{Output} containment locus for $I_1$ in~$I_2$ as
	$\bigcap_{i=1}^k \bigcup_{j=1}^{\ell_i} \big(V(J'_{ij}) \minus
	V(J_{ij})\big)$ for $J'_{ij}, J_{ij} \subseteq S$

\begin{routinelist}{define}
\item[] $f_1,\dots,f_k$ generators of~$I_1$, with homogenizations
	$\wt f_1, \dots, \wt f_k$
\item[] $q_1,\dots,q_r$ generators of the homogenization~$\wt I_2$,
	with degrees $d_1,\dots,d_r$
\end{routinelist}

\routine{while} $1 \leq i \leq k$ \procedure{do}

\begin{routinelist}{define}
\item[] $d := \deg f_i$
\item[] variables $c_{j,\aa}$ for $j \in \{1,\dots,r\}$ and $\{\aa \in
	\NN^{n+1} \,\big|\, |\aa| = d - d_j\}$
\item[] $L$, the system $\wt f_i = \sum_{j=1}^r \sum_{\substack{\aa \in
	\NN^{n+1}\\ |\aa| = d - d_j}} c_{j,\aa} \xx^\aa q_j$ of linear
	equations
     \begin{itemize}
     \item indexed by monomials of degree~$d$ and
     \item with coefficients in~$S$
     \end{itemize}
\item[] matrix $A$ with coefficients in~$S$ to represent~$L$
\item[] vector~$b$ with coefficients in~$S$ to represent~$\wt f_i$ (so
	$L$ is given by $A\cc = b$)
\item[] $A' := {[A|b]}$, the matrix obtained by appending the
	column~$b$ to~$A$
\item[] $J'_{ij} :=$ ideal of size~$j$ minors of~$A'$, for $1 \leq j
	\leq \ell_i = $ size of~$A'$
\item[] $J_{ij} := $ ideal of size $j-1$ minors of~$A$ for the same
	values of~$j$
\item[] $Z_i := \bigcup_j \big(V(J'_{ij}) \minus V(J_{ij})\big)$
\end{routinelist}

\routine{end}{}\procedure{while-do}\vspace{.5ex}

\routine{return} $\displaystyle Z = \bigcap_{i=1}^k Z_i$ defined by
	the ideals $J'_{ij}, J_{ij} \subseteq S$ as $\displaystyle Z_i
	= \bigcup_j \big(V(J'_{ij}) \minus V(J_{ij})\big)$
\end{alglist}

%%%%%%%%%%%%%%%%%%%%%%%%%%%%%%%%%%%%%%%%%%%%%%%%%%%%%%%%%%%%%%%%%%%%%%
\subsection{The locus of large fibers}\label{sub:locus}%%%%%%%%%%%%%%%

Given a morphism of schemes $X \to B$, we need to compute the locus
$B^{\geq d} \subset B$ of fibers of dimension at least~$d$.  A
deterministic algorithm for this task was given by Kemper
\cite{kemper}.  The probabilistic algorithm presented here is
therefore not theoretically required for algorithms in the rest of
paper, but it is much easier to implement and we expect it to run
faster.  In particular, Algorithm~\ref{a:dim} has allowed us to
compute explicit examples, some of which are presented here.

Note on conventions: Algorithm~\ref{a:dim} and its proof assume (and
implicitly use) that the field~$\kk$ is algebraically closed.  The
engine of the algorithm---and the only interesting part---is the following
subroutine that takes a scheme affine over the base as input; it
inherits the hypotheses from its parent algorithm where it is applied.

\begin{rtne}\label{r:dim}
(\procedure{affine case of fiber dim $\geq d$})
\end{rtne}
\begin{alglist}
\init{Input} affine morphism $f:X \to B$ of schemes, say $X
\subseteq \AA^n_B$

\init{Output} closure $B^{\geq d}$ of the locus of points in~$B$
	over which the fiber has dimension $\geq d$

\begin{routinelist}{initialize}
\item[] $B_0 := \nothing$
\item[] $B_1 := B$
\item[] $i := 1$
\end{routinelist}

\routine{while} $B_{i-1} \neq B_i$ \procedure{do}

\routine{choose} random affine subspace $L' \subseteq \AA_\kk^n$ of
	codimension~$d$

\begin{routinelist}{define}
\item[] $P :=$ closure of the projection to $B$ of $L' \cap X$
\item[] $B_{i+1} := B_i \cap P$
\end{routinelist}

\routine{advance} $i \from i+1$

\routine{end}{\procedure{while-do}}

\routine{return} $B_i$
\end{alglist}

\pagebreak

\begin{alg}\label{a:dim}
Find locus of big fiber dimension
\end{alg}
\begin{alglist}
\init{Input} integer $d$ and morphism $f:X \to B$ of schemes over
	algebraically closed field~$\kk$

\init{Output} closure $B^{\geq d}$ of the locus of points in~$B$
	over which the fiber has dimension $\geq d$

\routine{compute} open cover $X = \bigcup_{j=1}^k X_j$ by
	subschemes~$X_j$ affine/$B$ (note: $X_j$ affine suffices)

\routine{define} $B^{\leq d}_j := $ \procedure{affine case of fiber
	dim $\geq d$} applied to~$X_j$ for~\mbox{$j = 1,\dots,k$}

\routine{return} $B^{\leq d}_1 \cup \dots \cup B^{\leq d}_k$
\end{alglist}

\begin{prop}\label{p:dim}
Algorithm~\ref{a:dim} is correct.
\end{prop}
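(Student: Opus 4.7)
My argument has three stages: reduction to the affine subroutine, termination of the subroutine, and identification of the stable set with $\ol{B^{\geq d}}$.

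The reduction uses that fiber dimension is local on~$X$: for the open cover $X = \bigcup_j X_j$ into schemes affine over~$B$, one has $\dim X_b = \max_j \dim (X_j)_b$, whence $B^{\geq d} = \bigcup_j B_j^{\geq d}$ and its closure is the finite union of closures. So Algorithm~\ref{a:dim} is correct once Routine~\ref{r:dim} is. For the routine, termination is immediate: $B_1 \supseteq B_2 \supseteq \cdots$ is a descending chain of closed subsets of the Noetherian scheme~$B$, and I would also observe that $\ol{B^{\geq d}}$ is already closed by upper semicontinuity of fiber dimension. The easy inclusion $\ol{B^{\geq d}} \subseteq B_i$ I prove by induction: each fiber of $f^{-1}(\ol{B^{\geq d}}) \to \ol{B^{\geq d}}$ has dimension $\geq d$, so the total dimension is $\geq \dim \ol{B^{\geq d}} + d$; for a generic codimension-$d$ subspace $L'$, transversality gives $\dim(L' \cap f^{-1}(\ol{B^{\geq d}})) \geq \dim \ol{B^{\geq d}}$, and the projection to~$B$ is dense in $\ol{B^{\geq d}}$, so $\ol{B^{\geq d}} \subseteq P_i$.

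The main content is strict decrease: if $B_{i-1} \supsetneq \ol{B^{\geq d}}$, then for a generic choice of $L'_i$ one obtains $B_i \subsetneq B_{i-1}$. I would fix an irreducible component~$W$ of $B_{i-1}$ not contained in $\ol{B^{\geq d}}$, whose generic point~$y$ satisfies $\dim X_y < d$. For each irreducible component~$X_l$ of~$X$, either its image closure $V_l := \ol{f(X_l)}$ fails to contain~$W$---so its contribution $\ol{f(L'_i \cap X_l)} \subseteq V_l$ to $P_i$ cannot contain~$W$---or $V_l \supseteq W$, in which case upper semicontinuity forces the generic fiber dimension of $X_l \to V_l$ to be strictly less than~$d$ (else $\dim X_y \geq d$). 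In the latter case, I would stratify $V_l$ by the closed jumping loci $V_l^{\geq j} = \{v : \dim(X_l)_v \geq j\}$: over the open stratum $V_l \minus V_l^{\geq d}$ all fibers of~$X_l$ have dimension $\leq d-1$, and a transversality dimension count bounds the contribution to the image in~$V_l$ by a closed subset of dimension $\leq \dim V_l - 1$, proper in~$V_l$, while the closed stratum $V_l^{\geq d}$ is itself proper in~$V_l$ (since $\dim(X_l)_y < d$ forces $y \notin V_l^{\geq d}$). So the total image of $L'_i \cap X_l$ in~$V_l$ lies in a proper closed subset, and a Bertini-style moving argument then shows that for generic $L'_i$ this proper closed subset does not contain the fixed~$W$. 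Taking a finite intersection of the resulting Zariski-dense open conditions on~$L'_i$ over the finitely many components $X_l$ and finitely many bad $W$'s, a random choice of $L'_i$ produces $B_i \subsetneq B_{i-1}$ with probability one, contradicting stabilization at anything larger than $\ol{B^{\geq d}}$. \textbf{The main obstacle} is precisely the Bertini-style moving step---ruling out uniformly in~$L'_i$ that the varying closed image in~$V_l$ could happen to contain the fixed $W \subseteq V_l$---which I expect to follow from an incidence-variety argument on the parameter space of affine codimension-$d$ subspaces, but the setup requires care.
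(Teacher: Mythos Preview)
Your reduction to the affine case and your termination argument are fine, but both inclusion arguments have gaps, and the paper's route is substantially shorter.

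For the easy inclusion $\ol{B^{\geq d}} \subseteq B_i$: the dimension bound $\dim\big(L' \cap f^{-1}(\ol{B^{\geq d}})\big) \geq \dim \ol{B^{\geq d}}$ does not by itself force the projection to be dense in each component $Z$ of $\ol{B^{\geq d}}$; a large-dimensional intersection can map into a proper subvariety of~$Z$ with big fibers. What you actually need is that for generic $L'$ the set $\{z \in Z : L' \cap f^{-1}(z) \neq \nothing\}$ is dense in~$Z$. The paper gets this directly by a pointwise projective-closure argument: for each $z$ with $\dim f^{-1}(z) \geq d$, a general codimension-$d$ linear subspace of $\PP^n$ meets the projective closure $\ol{f^{-1}(z)}$ (subvarieties of complementary dimension in projective space always intersect), while the boundary $\ol{f^{-1}(z)} \minus f^{-1}(z)$ has dimension $< d$ and is therefore missed by a general such subspace. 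So the affine $L'$ already meets $f^{-1}(z)$ for general $z$ and general $L'$, which immediately gives $Z \subseteq P$.

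For strict decrease you are working much harder than necessary, and the step you flag as the ``main obstacle'' is exactly what the paper sidesteps. Instead of decomposing $X$ into components $X_l$, stratifying each $V_l$ by fiber-dimension jumping loci, and then attempting a Bertini-type moving argument to keep the varying image off the fixed~$W$, the paper again argues pointwise. Take a component $Q$ of $B_i$ whose general point $q$ has $\dim f^{-1}(q) < d$; then a general codimension-$d$ subspace of $\PP^n$ misses $\ol{f^{-1}(q)}$ entirely, and this persists over a neighborhood of~$q$. Hence the general point of~$Q$ is not in the projection of $L' \cap X$, so $Q$ is not a component of~$B_{i+1}$. The projective-closure device dissolves your obstacle: once you reason one fiber at a time, no moving lemma or incidence-variety machinery is needed at all.
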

\begin{proof}
It suffices to prove Routine~\ref{r:dim} works on an affine morphism
because a fiber is large if and only if it is large in (at least) one
member of an open cover of the source.  So assume $X \subseteq
\AA^n_B$.  Fix an irreducible component $Z$ of $B^{\geq d}$.
\begin{claim}
For a general point $z\in Z$ and a general affine subspace $L
\subseteq \AA^n$ of codimension~$d$, the fiber $f^{-1}(z)$ intersects
$L$.
\end{claim}
\begin{proof}[Proof of the claim]
This is deduced by taking closures in projective space, where
subvarieties of complementary dimension always intersect.  Of course
the projective closure of~$L$ is only guaranteed to intersect the
projective closure of the fiber, but the difference between an affine
algebraic subset and its projective closure has lower dimension.
\end{proof}

\noindent
The claim implies that $L' \cap X$ in Routine~\ref{r:dim} contains
general points of~$Z$, so $B^{\geq d} \subseteq B_i$.

It remains to prove that the output $B_i$ cannot be strictly bigger.
Consider a component $Q$ of any~$B_i$ (not necessarily the output
one).  Assume the fiber over a general point of $Q$ has dimension
smaller than~$d$.  To finish, we have to prove that $B_{i+1} \neq
B_i$.  Consider a fiber $F$ over a general point $q \in Q$.  Since
$\dim F < d$, a general subspace of codimension~$d$ in $\PP^n$ fails
to meet the closure of~$F$ in~$\PP^n$, and this remains true in a
neighborhood of $q$.  Thus the general point of~$Q$ is not in the
projection of $L' \cap X$.  Hence $Q$ is not a component of~$B_{i+1}$.
\end{proof}

\begin{remark}\label{r:radical}
The algorithms in this paper do not assume that constructible sets are
presented using radical ideals.  In particular, the algorithm outputs
might not be radical ideals.  However, when comparing two
constructible sets to discover containment or equality, it is
sometimes simplest to take radicals.  In the case of
Routine~\ref{r:dim}, radicals could be used (this occurs explicitly in
Example~\ref{e:macaulay2}, which uses Algorithm~\ref{a:dim}); but for
us it was faster to first run the subroutine a few times---with
different linear forms---without computing radicals, and only later
compare sets by computing radicals.
\end{remark}

%%%%%%%%%%%%%%%%%%%%%%%%%%%%%%%%%%%%%%%%%%%%%%%%%%%%%%%%%%%%%%%%%%%%%%
\section{Detecting big group actions and multigradings}\label{sec:T-act}
%%%%%%%%%%%%%%%%%%%%%%%%%%%%%%%%%%%%%%%%%%%%%%%%%%%%%%%%%%%%%%%%%%%%%%

%%%%%%%%%%%%%%%%%%%%%%%%%%%%%%%%%%%%%%%%%%%%%%%%%%%%%%%%%%%%%%%%%%%%%%
\subsection{Group actions and families}\label{sub:actions}%%%%%%%%%%%%

Let $G$ be an algebraic group (over~$\kk$, as always), acting on
affine $n$-space $\AA^n$ over~$\kk$.  The action is a morphism
$\alpha: G \times \AA^n \to \AA^n$, and it is assumed to be a left
action, so $\gamma.(\gamma'.\xi) = (\gamma\gamma').\xi$ for points
$\gamma,\gamma' \in G$ \mbox{and $\xi \in \AA^n$ over~$\kk$}.

The geometric action on~$\AA^n$ is equivalent to an algebraic action
on~$\kk[\xx]$; for $\gamma \in G$ and $f \in \kk[\xx]$, the function
$f.\gamma$ sends $\xi \mapsto f(\gamma.\xi)$ for all $\xi \in \AA^n$.
More formally, $\alpha$ induces a ring homomorphism $\alpha^*:
\kk[\xx] \to \kk[G] \otimes \kk[\xx]$, where $\kk[G]$ is the
coordinate ring of $G$ (not its group algebra over~$\kk$), satisfying
the axioms dual to the group action axioms.  For any point $\gamma \in
G$ let $\ev_\gamma: \kk[G] \to \kappa_\gamma$ be the evaluation map,
meaning the algebra morphism corresponding to the inclusion
$\{\gamma\} \into G$.  Then for a $\kk$-valued group element~$\gamma$,
the composition $(\ev_\gamma \otimes \id) \circ \alpha^*$ gives a map
$\alpha^\dagger: G \to \Ende(\kk[\xx],\kk[\xx])$, and $f.\gamma =
\alpha^\dagger(\gamma)(f)$.  For any ideal $I \subseteq \kk[\xx]$ and
point $\gamma \in G$ over~$\kk$, set $I.\gamma = \{f.\gamma \with f
\in I\}$.

In the setting of schemes over~$G$, the crucial families are the
orbits.  To define them, one more bit of general notation helps: for
any two schemes $X$ and~$B$ over~$\kk$, write $X_B = B \times X$ and
consider it as a family over~$B$.  The reader will lose little by
thinking always of $B = G$, as in the following definition.

\begin{defn}\label{d:orbit}
The \emph{orbit morphism} is $\omega := \pi_G \times \alpha: \AA^n_G
\to \AA^n_G$, where $\pi_G$ is the projection to~$G$.  If $X \subseteq
\AA^n$ is a subscheme, then its \emph{orbit} is $\OO_G X =
\omega(X_G)$.
\end{defn}

\begin{remark}\label{r:orbit}
On $\kk$-valued points, the orbit morphism is $(\gamma,\xi) \mapsto
(\gamma,\gamma.\xi)$.  Geometrically, the orbit of~$X$ is a family
over~$G$ whose fibers are the translates of~$X$ by group elements.
The terminology comes from the case where $X$ is a point~$\xi$,
because the projection of $\OO_G \xi$ to~$\AA^n$ is indeed the
$G$-orbit of~$\xi$.

More formally, if $X \subseteq \AA^n$ is a subscheme, then $\gamma.X$
is the fiber of $\OO_G X$ over $\gamma \in G$.  Algebraically, $X$ is
defined by an ideal $I \subseteq \kk[\xx]$, and $\OO_G X$ is defined
by the ideal $J \subseteq S[\xx]$, where $S = \kk[G]$ and $J =
(\omega^*)^{-1}(I)$.  The ideal $I.\gamma^{-1}$ defining the subscheme
$\gamma.X$ of the affine space $\AA^n_\gamma$ over the residue
field~$\kappa_\gamma$ is a specialization of~$J$, namely the extension
$J\kappa_\gamma[\xx]$ of~$J$ to the polynomial ring over the residue
field of~$\gamma$.
\end{remark}

\begin{remark}\label{r:inverse}
Images of ideals are much easier to compute than preimages.
Therefore, in order to compute the ideal $J$ above in a concrete case,
it might be a good idea to first compute the inverse map of $\omega$,
which amounts to computing the map $\gamma \mapsto \gamma^{-1}$ on
$G$, and then obtain $J$ as push-forward of $I$ along that map. 
\end{remark}

The locus where an orbit is contained in any given family is always
closed.

\begin{cor}\label{c:constantclosed}
If a subscheme $X \subseteq \AA^n$ is given and $X_2$ is the constant
family~$X_G$ or orbit~$\OO_G X$, then the containment locus of~$X_2$
in~$X_1$ is closed for any family~$X_1$ over~$G$.
\end{cor}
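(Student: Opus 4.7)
The plan is to apply Theorem~\ref{t:equal-locus2}, which (writing $S = \kk[G]$) yields a closed containment locus whenever the quotient $S[\xxt]/\wt{I}_2$ is flat over $S$. The two cases of the corollary are handled by verifying this flatness directly in the constant family case and by reducing to it in the orbit case.

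For the constant family $X_2 = X_G$, the defining ideal is $I_2 = I \cdot S[\xx]$ with $I = I(X) \subseteq \kk[\xx]$. The key identity $\wt{I}_2 = \wt{I} \cdot S[\xxt]$ I would establish by presenting the homogenization as an $x_0$-saturation of the ideal generated by $\wt{g_1}, \dots, \wt{g_r}$ for any generators $g_1, \dots, g_r$ of $I$, and noting that saturation commutes with the flat base change $\kk \to S$. Then $S[\xxt]/\wt{I}_2 \cong S \otimes_\kk \kk[\xxt]/\wt{I}$ is free, hence flat, over $S$, so Theorem~\ref{t:equal-locus2} closes this case immediately. This identification is the only nontrivial technical step.

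For the orbit $X_2 = \OO_G X$, the analogous flatness can genuinely fail---if the action of $G$ does not extend to $\PP^n$, the projective closure of $\gamma.X$ may jump in Hilbert function as $\gamma$ varies---so a direct application of Theorem~\ref{t:equal-locus2} is not available. The plan is to exploit that the orbit morphism $\omega : \AA^n_G \to \AA^n_G$ is an isomorphism over $G$, with inverse $(\gamma,\eta) \mapsto (\gamma, \gamma^{-1}.\eta)$ on $\kk$-valued points (cf.\ Remark~\ref{r:inverse}), and to transport $X_1$ rather than $X_2$. The fiber of $\omega^{-1}(X_1)$ over $\gamma$ is $\gamma^{-1}.(X_1)_\gamma$, so
\[
  \gamma.X \subseteq (X_1)_\gamma \iff X \subseteq \gamma^{-1}.(X_1)_\gamma,
\]
which identifies the containment locus of $\OO_G X$ in $X_1$ with the containment locus of the constant family $X_G$ in $\omega^{-1}(X_1)$. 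The latter is closed by the case already handled, completing the proof.
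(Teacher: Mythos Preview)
Your proof is correct and follows essentially the same approach as the paper's own proof: verify flatness directly for the constant family (the paper simply asserts ``the homogenization of a trivial family is trivial and hence flat,'' while you supply the saturation argument making this precise), and then reduce the orbit case to the constant case via the isomorphism~$\omega$ over~$G$. Your observation that flatness of $S[\xxt]/\wt{I}_2$ can genuinely fail for the orbit, necessitating the reduction rather than a direct appeal to Theorem~\ref{t:equal-locus2}, is exactly the content of the paper's Remark~\ref{r:not-flat}.
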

\begin{proof}
The homogenization of a trivial family is trivial and hence flat;
therefore the $X_G$ case follows immediately from
Theorem~\ref{t:equal-locus2}.  The $\OO_G X$ case follows by first
applying the inverse $\omega^{-1}$ of the orbit morphism, then
applying the $X_G$ case, and then applying~$\omega$.
\end{proof}

\begin{remark}\label{r:not-flat}
Neither Corollary~\ref{c:constantclosed} nor its proof claims that the
homogenization of an orbit must necessarily be flat over the group;
that is, we do not require (or claim) that homogenizations of orbits
satisfy the flatness hypothesis in Theorem~\ref{t:equal-locus2}.  But
inverse multiplication brings an orbit into a position where flatness
does hold, and hence the closedness conclusion follows even if the
flatness hypothesis does not.
\end{remark}

%%%%%%%%%%%%%%%%%%%%%%%%%%%%%%%%%%%%%%%%%%%%%%%%%%%%%%%%%%%%%%%%%%%%%%
\subsection{Finding big group actions}\label{sub:big-actions}%%%%%%%%%

In this section, $T$ is an algebraic group acting on affine space via
a map $\beta: T \times \AA^n \to \AA^n$.  We are interested in finding
those elements of~$G$ which make a given subscheme $X \subseteq \AA^n$
invariant under~$T$, or invariant under a subgroup of~$T$ whose
dimension is as big as possible.  For terminology, we say that a group
acting on a space \emph{stabilizes} a subspace if the subspace is
preserved by the action (not necessarily pointwise); in contrast, we
say that the group \emph{fixes} a subspace if every point in the
subspace is fixed by the group action.

\begin{example}
Let $T = (\kk^*)^n$ be the $n$-dimensional torus acting diagonally
on~$\AA^n$.  If $X \subseteq \AA^n$ is a subvariety (reduced
irreducible subscheme) such that for some $\gamma \in G$ the
subvariety $\gamma.X \subseteq \AA^n$ is stabilized by a subtorus $T'
\subset T$ with $\dim T' = \dim X$, then by some definitions
$\gamma.X$ is already toric, and by others it becomes so after further
rescaling the variables \cite[Corollary~2.6]{binomialideals}.  In the
latter case, $\gamma'.X$ is toric for some other $\gamma' \in G$ if $G
\supseteq T$.  (Yet other definitions require $X$ to be normal; we do
not.)
\end{example}

We start with the following useful lemma.

\begin{lemma}\label{l:stabilizer}
The stabilizer of any ideal $I \subseteq \kk[\xx]$ is a closed
subgroup of~$T$.
\end{lemma}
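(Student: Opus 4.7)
The plan is to realize the stabilizer of~$I$ inside~$T$ as a coincidence locus of two families over the base $B = T$, so that closedness follows directly from Corollary~\ref{c:constantclosed}. That $\Stab_T(I) = \{\tau \in T \mid I.\tau = I\}$ is a subgroup is immediate from the action axioms: the identity stabilizes, the set is closed under products, and since $T$ is a group it is closed under inversion as well. So the only real content is closedness.

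For the closedness step, I would set $X = V(I) \subseteq \AA^n$ and compare two families over~$T$: the orbit $\OO_T X$, whose fiber over~$\tau$ is the translate $\tau.X$ (defined by the ideal $I.\tau^{-1}$, cf.\ Remark~\ref{r:orbit}), and the constant family~$X_T$, whose fiber over every $\tau$ is~$X$ itself (ideal~$I$). By Definition~\ref{d:locus}, the coincidence locus of these two families consists of precisely those $\tau \in T$ for which $I.\tau^{-1} = I$, which is exactly $\Stab_T(I)$ once one notes that inversion is a bijection on~$T$ preserving the stabilizer.

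The conclusion then follows by applying Corollary~\ref{c:constantclosed} to each containment direction: with $X_2 = X_T$ the containment locus of~$X_T$ in~$\OO_T X$ is closed (constant-family case), and with $X_2 = \OO_T X$ the containment locus of~$\OO_T X$ in~$X_T$ is closed (orbit case). The coincidence locus is the intersection of these two closed sets and hence closed in~$T$, so $\Stab_T(I)$ is a closed subgroup.

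I do not expect a serious obstacle. The only piece of bookkeeping to watch is the direction of the action---whether~$\tau$ or~$\tau^{-1}$ appears in a given fiber---but the stabilizer's invariance under inversion absorbs this ambiguity. No extra flatness hypothesis needs to be verified, because Corollary~\ref{c:constantclosed} already supplies closedness for both constant families and orbits unconditionally, via the inverse-multiplication trick noted in Remark~\ref{r:not-flat}.
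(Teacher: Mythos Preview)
Your proposal is correct and follows essentially the same route as the paper: set $X = V(I)$, invoke Corollary~\ref{c:constantclosed} for both containment directions between $X_T$ and $\OO_T X$, and intersect. The paper's proof is just a one-sentence version of what you wrote; your added remarks about the subgroup axioms and the $\tau$ vs.\ $\tau^{-1}$ bookkeeping are fine elaborations but not new ideas.
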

\begin{proof}
For the scheme $X \subseteq \AA^n$ defined by~$I$, the containment
loci both for $X_T$ in~$\OO_T X$ and for $\OO_T X$ in~$X_T$ are closed
by Corollary~\ref{c:constantclosed}, so their coincidence locus is
closed.
\end{proof}

\begin{remark}\label{r:containment}
The two containment loci in the proof of Lemma~\ref{l:stabilizer} are
in fact equal, because $I.\tau \subseteq I \implies I.\tau = I$ for
$\tau \in T$.  Indeed, $I.\tau \subsetneq I$ implies $I.\tau^i
\subsetneq I.\tau^{i-1}$ for all $i \in \ZZ$ (including negative~$i$),
contradicting the noetherian property of~$\kk[\xx]$.
\end{remark}

If a subgroup $T' \subseteq T$ stabilizes a subscheme $X \subseteq
\AA^n$, then the action of~$T$ restricts to an action of~$T'$ on~$X$.
We now consider the question for which $\gamma \in G$ there is a large
subgroup of~$T$ stabilizing~$\gamma.X$.

\begin{prop}\label{p:largegroup}
Let $X \subseteq \AA^n$ be a closed subscheme.  Let further $G'
\subseteq G$ be the locus of those $\gamma \in G$ where the dimension
of the stabilizer subgroup $T(\gamma.X) \subseteq T$ is maximal.  Then
$G'$ is closed.
\end{prop}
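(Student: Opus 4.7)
The plan is to realize $G'$ as a closed upper-level set of an upper semicontinuous function on $G$. Set $B := T \times G$ and introduce
\[
  Y \;:=\; \{(\tau,\gamma) \in B \with \tau.(\gamma.X) = \gamma.X\} \;\subseteq\; B,
\]
whose fiber $Y_\gamma$ over $\gamma \in G$ under the projection $\pi \colon Y \to G$ is the stabilizer subgroup $T(\gamma.X) \subseteq T$. Two steps then suffice: show $Y$ is closed in~$B$, and show $\gamma \mapsto \dim T(\gamma.X)$ is upper semicontinuous on~$G$. Since this dimension takes only finitely many integer values between~$0$ and $\dim T$, its maximum is attained, and the max-locus $G'$ is then the closed upper-level set at that maximum.

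Closedness of $Y$ uses the inverse-multiplication trick from the proof of Corollary~\ref{c:constantclosed} (see also Remark~\ref{r:not-flat}). Since $\gamma^{-1}$ acts invertibly on~$\AA^n$, the containment $\tau.(\gamma.X) \subseteq \gamma.X$ is equivalent to $(\gamma^{-1}\tau\gamma).X \subseteq X$. The left-hand side is the fiber at $(\tau,\gamma)$ of a family over~$B$ obtained by pushing the constant family $X_B$ forward under a $B$-automorphism of $B \times \AA^n$, while the right-hand side is just $X_B$. The homogenization of the constant family is $\wt I \otimes_\kk \kk[B]$, flat over $\kk[B]$ by base change from the field~$\kk$, so the closedness clause of Theorem~\ref{t:equal-locus2} makes the containment locus closed in~$B$. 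By Remark~\ref{r:containment} applied to the $T$-action on subschemes of~$\AA^n$, any such containment is automatically an equality, so this closed locus coincides with~$Y$.

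For upper semicontinuity, note that the identity section $\sigma \colon G \to Y$, $\gamma \mapsto (e,\gamma)$, is a morphism because $e \in T$ stabilizes every subscheme. Chevalley's upper semicontinuity of fiber dimension on the source of a morphism of finite type makes $y \mapsto \dim_y \pi^{-1}(\pi(y))$ upper semicontinuous on~$Y$, and pullback along~$\sigma$ yields upper semicontinuity of $\gamma \mapsto \dim_{(e,\gamma)} Y_\gamma$ on~$G$. Each fiber $Y_\gamma = T(\gamma.X)$ is an algebraic subgroup of~$T$, hence equidimensional---every irreducible component is a translate of the identity component---so the local dimension at $(e,\gamma)$ equals $\dim Y_\gamma = \dim T(\gamma.X)$. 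This function is therefore upper semicontinuous, and~$G'$ is closed as its max-locus.

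The only real obstacle is the closedness of~$Y$: without conjugating by~$\gamma^{-1}$, neither of the natural families $\tau.(\gamma.X)$ nor $\gamma.X$ over~$B$ has an obviously flat homogenization, and Theorem~\ref{t:equal-locus2} alone would deliver only constructibility. The inverse-multiplication trick manoeuvres the containment so that the larger family becomes the constant family $X_B$, unlocking the closedness conclusion.
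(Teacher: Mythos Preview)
Your approach matches the paper's: realize $G'$ as the maximal level set of the fiber-dimension function for a closed $Y \subseteq T \times G$, using Chevalley semicontinuity pulled back along the identity section together with equidimensionality of algebraic subgroups. That half is identical to the paper's proof.

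There is, however, a slip in your closedness argument for~$Y$. In the containment $(\gamma^{-1}\tau\gamma).X \subseteq X$, the \emph{smaller} scheme is the moving family, so in the notation of Theorem~\ref{t:equal-locus2} it is $I_2$---not $I_1$---that is the ideal of the moving family; the flatness hypothesis there is on~$\wt I_2$, and you have verified flatness only for the constant family $X_B$, which plays the role of~$I_1$. A $B$-automorphism of $\AA^n_B$ need not extend to~$\PP^n_B$, so there is no reason for the homogenization of the moving family to be flat. The fix is immediate and already implicit in the trick you cite from Corollary~\ref{c:constantclosed}: either apply your $B$-automorphism once more so that the \emph{smaller} scheme becomes $X_B$, or simply work with the reverse containment $X \subseteq (\gamma^{-1}\tau\gamma).X$ from the outset, which by Remark~\ref{r:containment} cuts out the same locus~$Y$ and places the constant family on the correct side of Theorem~\ref{t:equal-locus2}.
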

\begin{proof}
Let $Y \subseteq T \times G$ be the coincidence locus for $\OO_T(\OO_G
X)$ and $(\OO_G X)_T$, viewed as schemes over $T \times G$.  The fiber
$Y_\gamma$ of $Y$ over $\gamma \in G$ is the stabilizer $T(\gamma.X)$
of~$X$ in~$T$.  The subset $Y \subseteq T \times G$ is closed by
Corollary \ref{c:constantclosed}, and $G'$ is the locus of points
$\gamma \in G$ such that $Y_\gamma$ has maximal dimension.

If $d(\gamma) = \dim Y_\gamma$ then $d(\gamma)$ is the local dimension
of $Y_\gamma$ at $\tau$ for all $\tau \in Y_\gamma$, in particular at
$\tau = 1_T$, because $Y_\gamma$ is a subgroup of $T$.  Upper
semicontinuity of fiber dimension locally on the source
\cite[13.1.3]{EGAIV} implies that the subset $Z \subseteq Y$ with
maximal local fiber dimension is closed in $Y$.  Therefore $1_T \times
G' = Z \cap (1_T \times G)$ is closed in the identity section $1_T
\times G$.  The result follows because $G \to 1_T \times G$
is~an~isomorphism.
\end{proof}

\begin{example}
Proposition~\ref{p:largegroup} says that stabilizer dimensions for
fibers of orbits $\OO_G X_G$ are upper semicontinuous.  In contrast,
semicontinuity can fail for families that are not orbits.  The
simplest instance has $T = \kk^*$, $S = \kk[s]$, and $I = \<sx,
x(x-1)\> \subset S[x]$.  Then $V(I)$ is the union of the $s$-axis with
the point $(0,1)$.  So for $s \neq 0$, $V(I)$ is stable under $T$,
while for $s = 0$ it is only stable under the trivial subgroup of $T$.
Thus, the locus where the stabilizer has maximal dimension is not
closed.
\end{example}

\begin{alg}\label{a:group}
Act to make stabilizing subgroup of maximal dimension
\end{alg}
\begin{alglist}
\begin{initlist}{Input}
\item[] two algebraic groups $T$ and $G$ acting on $\AA^n$
\item[] a closed subscheme $X \subset \AA^n$
\end{initlist}

\init{Output} the closed set $G' \subseteq G$ such that for any
	$\gamma \in G'$ the dimension of the subgroup of~$T$
	stabilizing $\gamma.X$ is maximal

\begin{routinelist}{compute}
\item[] $X_1 := \OO_T(\OO_G X_G)$
\item[] $X_2 := (\OO_G X_G)_T$
\item[] coincidence locus $Y \!\subseteq T \times G$ for $X_1$
	and~$X_2$ (Algorithm~\ref{a:equalfibers}; see also
	Remark~\ref{r:containment})
\item[] maximal fiber dimension locus $G' \subseteq G$ for $Y \to G$
	(Algorithm~\ref{a:dim})
\end{routinelist}

\routine{return} $G'$
\end{alglist}

%%%%%%%%%%%%%%%%%%%%%%%%%%%%%%%%%%%%%%%%%%%%%%%%%%%%%%%%%%%%%%%%%%%%%%
\subsection{Finding multigradings}\label{sub:multigradings}%%%%%%%%%%%

A \emph{$\ZZ^r$-multigrading} on $\kk[\xx]$ is specified by $n$
vectors $\aa_1, \dots, \aa_n \in \ZZ^r$ to serve as degrees of the
variables: $\deg x_i = \aa_i$.  The multigrading is \emph{faithful} if
$\aa_1, \dots, \aa_n$ generate~$\ZZ^r$.

\begin{remark}\label{r:A-graded}
Details on multigraded algebra in general can be found in
\cite[Chapter~8]{cca}.  A $\ZZ^r$-multigrading on $\kk[\xx]$
corresponds uniquely to the action on~$\kk^n$ of the
$r$-torus~$(\kk^*)^r$.  (References for this are hard to locate.  An
exposition appears in Appen\-dix~A.1 of the first arXiv version
of~\cite{grobGeom}, at
{\small\texttt{http://arxiv.org/abs/math/0110058v1}}.)  An abelian
group homomorphism $\ZZ^n \to \ZZ^r$ sending the standard basis to
$\aa_1, \dots, \aa_n$ corresponds (by applying the $\Hom_\ZZ(-,\kk^*)$
functor) to an algebraic group homomorphism $(\kk^*)^r \to (\kk^*)^n$
that is injective precisely when the multigrading is faithful.  The
assertion that $\deg x_i = \aa_i$ means that the $j^\th$ generator
$\tau_j$ of the $r$-torus acts on~$x_i$ by $x_i.\tau_j =
\tau_j^{a_{ij}} x_i$.  Geometrically speaking, multigraded (i.e.,
homogeneous) ideals correspond to subschemes of~$\kk^n$ that carry
$(\kk^*)^m$-actions.
\end{remark}

The following is the detailed algebraic phrasing of
Algorithm~\ref{a:group} when $T$ is the standard algebraic $n$-torus
acting diagonally on~$\kk^n$.

\begin{alg}\label{a:grading}
Act to find a faithful multigrading of maximal rank
\end{alg}
\begin{alglist}
\begin{initlist}{Input}
\item[] an ideal $I \subset \kk[\xx]$
\item[] a morphism $\omega^*:\kk[G]\otimes\kk[\xx] \to \kk[G]\otimes \kk[\xx]$
\end{initlist}

\begin{initlist}{Output}
\item[] an element $\gamma \in G$,
\item[] a nonnegative integer~$r$, and
\item[] vectors $\aa_1, \dots, \aa_n \in \ZZ^r$ defining a faithful
	multigrading that makes $I.\gamma$ homogeneous and $r$ as big
	as possible
\end{initlist}

\begin{routinelist}{define}
\item[] $R := \kk[t_1^\pm, \dots, t_n^\pm] = \kk[T]$, the Laurent
	polynomial ring
\item[] $\beta^*: \kk[\xx] \to R \otimes \kk[\xx]$, the algebra map
	specified by $x_i \mapsto x_i t_i$ for $1 \leq i \leq n$
\item[] $I_1 := (\id \otimes \beta^*)\big((\omega^*)^{-1}(I)\big) \subseteq R
	\otimes \kk[G]\otimes \kk[\xx]$
\item[] $I_2 := (\omega^*)^{-1}(I) R \subseteq R\otimes\kk[G]\otimes\kk[\xx]$
\end{routinelist}

\begin{routinelist}{compute}
\item[] coincidence locus $Y \subseteq \spec (\kk[G] \otimes R)$ of
	$I_1$ and $I_2$ (Algorithm~\ref{a:equalfibers})
\item[] maximal fiber dimension locus $G' \subseteq G$ for $Y \to G$
	(Algorithm~\ref{a:dim})
\item[] any point $\gamma \in G'$
\item[] generators for the kernel $J_\gamma$ of $R \to \kk[Y_\gamma]$
	(a binomial ideal \cite[Thm.~2.1]{binomialideals})
\item[] a basis for $\ZZ^n/L$, where $L = \<\aa - \bb \mid
	\mathbf{t}^\aa-\mathbf{t}^\bb\text{ is a generator of }J_\gamma\>$
\end{routinelist}

\begin{routinelist}{return}
\item[] group element $\gamma^{-1} \in G$,
\item[] integer $r := n - \rank(L)$, and
\item[] $n$ images of standard basis elements in $\ZZ^n/L$ expressed
	in the computed basis
\end{routinelist}
\end{alglist}\vspace{-.9ex}

%%%%%%%%%%%%%%%%%%%%%%%%%%%%%%%%%%%%%%%%%%%%%%%%%%%%%%%%%%%%%%%%%%%%%%
\section{Detecting binomial and unital ideals}\label{s:binom}%%%%%%%%%
%%%%%%%%%%%%%%%%%%%%%%%%%%%%%%%%%%%%%%%%%%%%%%%%%%%%%%%%%%%%%%%%%%%%%%

This section presents algorithms to decide whether $I \subseteq
\kk[\xx]$ can be made binomial or unital by an automorphism
of~$\kk[\xx]$.  Let us recall the relevant definitions.

\begin{defn}\label{d:binomial}%[Binomial, unital, and monoidal ideals]
A \emph{binomial} is a polynomial $\xx^\aa - \lambda\xx^\bb$ for some
$\lambda \in \kk$ and $\aa,\bb \in \NN^n$.  An ideal in $\kk[\xx]$ is
\begin{enumerate}
\item%
\emph{binomial} if it is generated by binomials, and
\item%
\emph{unital} (cf. \cite{mesoprimary}) if it is generated by monomials
and differences of monomials, meaning binomials $\xx^\aa -
\lambda\xx^\bb$ with $\lambda \in \{0,1\}$.
\item%
\emph{toric} if it is unital and prime.
\end{enumerate}
\end{defn} 

%%%%%%%%%%%%%%%%%%%%%%%%%%%%%%%%%%%%%%%%%%%%%%%%%%%%%%%%%%%%%%%%%%%%%%
\subsection{Algorithm to locate binomial models}\label{sub:alg-binom}%

We start by recalling the construction of comprehensive Gr\"obner
bases \cite{weispfenning1992comprehensive, montes2010grobner} and
adapting it to our case.  Let $S$ be a domain which is a commutative
$\kk$-algebra and let $I \subseteq S[\xx]$ be an ideal.  Fix any term
order.  Buchberger's algorithm for $I$ over the generic point---that
is, in the polynomial ring $\kappa_0[\xx]$ over the fraction field
$\kappa_0$ of~$S$---finds a reduced Gr\"obner basis.  It finishes in a
finite number of steps.  In step~$i$ of the algorithm, one needs to
assume that a leading coefficient $f_i\in S$ is nonzero.  The output
of the algorithm is a finite set of functions $g_j\in I$.  Consider
the ideal $J = \big\<\prod_i f_i\big\> \subseteq S$.  We claim that
over any
% There is no need to specify that the point is $\kk$-valued or even
% closed, since Buchberger's algorithm is rational: its coefficients
% come from~$S$ (and its fraction field); indeed, all that's required
% is nonvanishing of polynomials along the way, that has nothing to do
% with field extensions of~$\kk$, be they finite or transcendental.
point $\pp \in \spec S \minus V(J)$, the reduction of $g_j$ is the
reduced Gr\"obner basis for the reduction of $I$.  Indeed, over such a
point the usual Buchberger algorithm makes exactly the same steps as
the algorithm run over the generic point.  Repeating the procedure for
each irreducible component of $V(J)$ yields:
\begin{itemize}
\item%
a partition of $\spec S$ into irreducible, relatively open sets~$U_i$;
and
\item%
for each $U_i$ a finite set of polynomials $g_{ij}$ in~$I$ that
specializes over any point of~$U_i$ to a reduced Gr\"obner basis of
the reduction of $I$.
\end{itemize}
We call this data structure a \emph{relative reduced Gr\"obner basis}.

The following algorithm forces all coefficients except for the leading
coefficient and (at most) one other to be~$0$; more precisely, it
computes the locus where this is possible.  The result is the binomial
locus because an ideal is binomial if and only if some (equivalently,
every) reduced Gr\"obner basis consists of binomials
\cite[Corollary~1.2]{binomialideals}.

\begin{alg}\label{a:binomial}
Find the locus of binomial fibers
\end{alg}
\begin{alglist}
\begin{initlist}{Input}
\item[] commutative $\kk$-algebra $S$ that is an integral domain
\item[] ideal $I \subseteq S[\xx]$
\end{initlist}

\init{Output} constructible subset $Z \subseteq \spec S$ such that
	$I\kappa_\pp[\xx]$ is binomial precisely for $\pp \in Z$

\routine{compute} relative reduced Gr\"obner basis $\{(U_i,(g_{ij})_j)
	\with i = 1,\dots,k\}$ of~$I$ over~$S$

\routine{initialize} $i = 1$

\routine{while} $i = 1,\dots,k$ \procedure{do}

\begin{routinelist}{define}
\item[] $\NL_i := \prod_j \{$nonleading monomials of~$g_{ij}\}$, which
	is a cartesian product of sets
\item[] ideal $\num F := \<$numerator of~$f \with f \in F\> \subseteq
	S$ for any set $F \subseteq \kappa_0$ of fractions
\item[] ideal $J_{i\ell} \!:=\!	\sum_j\num\{$coefficients of~$g_{ij}$
	neither $\ell_j^{\,\,\th}$ nor leading$\} \subseteq S$ for
	$\ell \in \NL_i$
\end{routinelist}

\routine{advance} $i \from i+1$

\routine{end}{\procedure{while-do}}

\routine{return} $\bigcup_{i=1}^k Z_i$ for $Z_i = U_i \cap
	\big(\bigcup_{\ell \in \NL_i} V(J_{i\ell})\big)$
\end{alglist}

%%%%%%%%%%%%%%%%%%%%%%%%%%%%%%%%%%%%%%%%%%%%%%%%%%%%%%%%%%%%%%%%%%%%%%
\subsection{Algorithm to locate unital models}\label{sub:alg-unital}%%

It is possible for a group~$G$ to have the power to transform a given
ideal~$I$ into binomial form without $G$ being able to transform~$I$
into unital form, even though a larger group $G$ could succeed in
making~$I$ unital.  Trivial examples exist: any principal non-unital
binomial ideal with $G = \{1\}$ suffices.  But it is even possible for
$G$ to contain the entire torus.  (See also
Example~\ref{e:non-unital}.)

\begin{example}\label{e:binomial-not-unital}
Consider the binomial ideal $I = \<xz, z(z-1), z(y-2), x(x-1),
x(y-1)\>$.  If $z \neq 0$ then the only point in $V(I)$ is $p_1 =
(0,2,1)$, and if $x \neq 0$ then the only point is $p_2 = (1,1,0)$.
The map that fixes $x$ and~$y$ but sends $y \mapsto y+z$ makes $I$
unital.
	
On the other hand, if $\tau \in (\CC^*)^3$ then $y(\tau.p_1) \neq 1$
or $y(\tau.p_2) \neq 1$.  As both cases are analogous, say
$y(\tau.p_1) \neq 1$.  Any monomial evaluated on $(0,1,1)$ is either
$0$ or $1$. Further it is equal to $0$ if and only if it is equal to
$0$ when evaluated on $\tau.p_1$.  Thus if two monomials are equal
when evaluated on $\tau.p_1$ then they are also equal when evaluated
on $(0,1,1)$.  Hence, any unital binomial or monomial that vanishes on
$\tau.p_1$ vanishes also on $(0,1,1)$.  If $I.\tau$ were unital, this
would contradict the fact that $\tau.p_1$ is the only point with $z
\neq 0$.
\end{example}

The main idea of our algorithm is that unital ideals $I$ can be
characterized by the fact that the variety $V(I)$ is closed under
coordinatewise multiplication.  It turns out to be more convenient to
work with the algebraic counterpart of the multiplication, which is
the diagonal map $\Delta$.

Precisely, let $\Delta: \kk[\xx] \to \kk[\xx] \otimes_\kk \kk[\xx]$ be
the algebra homomorphism defined by $\Delta(x_i) = x_i \otimes x_i$.
This makes $\kk[\xx]$ into a bialgebra.  The induced map $\Delta^*:
\AA^n \times \AA^n \to \AA^n$ is the coordinatewise multiplication
map.

\begin{prop}\label{p:monoidal}
The following are equivalent for an ideal $I \subseteq \kk[\xx]$.
\begin{enumerate}
\item\label{eq:m1:1}%
$I$ is unital.
\item\label{eq:m1:2}%
$I$ is a coideal with respect to $\Delta$; that is, $\Delta(I)
\subseteq \kk[\xx] \otimes_\kk I + I \otimes_\kk \kk[\xx]$.
\end{enumerate}
\end{prop}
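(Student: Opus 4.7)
The plan is to handle the two implications separately. For $(\ref{eq:m1:1}) \Rightarrow (\ref{eq:m1:2})$, I would first observe that $K := \kk[\xx] \otimes I + I \otimes \kk[\xx]$ is an ideal of $\kk[\xx] \otimes \kk[\xx]$ and $\Delta$ is a ring homomorphism, so it suffices to verify $\Delta(f) \in K$ on a generating set of~$I$. A monomial generator $\xx^\aa \in I$ gives $\Delta(\xx^\aa) = \xx^\aa \otimes \xx^\aa \in I \otimes \kk[\xx]$, and a difference-of-monomials generator $\xx^\aa - \xx^\bb$ satisfies the elementary identity
$$
\Delta(\xx^\aa - \xx^\bb) = (\xx^\aa - \xx^\bb) \otimes \xx^\aa + \xx^\bb \otimes (\xx^\aa - \xx^\bb),
$$
exhibiting it as an element of $I \otimes \kk[\xx] + \kk[\xx] \otimes I$.

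For the converse, the hypothesis says precisely that $\Delta$ descends to a coassociative coproduct on the quotient $B := \kk[\xx]/I$, making $B$ a coalgebra. The natural counit $\kk[\xx] \to \kk$ given by $f \mapsto f(1,\dots,1)$ need not vanish on~$I$, so $B$ will in general lack a counit, but this causes no harm. Since $\Delta(\xx^\aa) = \xx^\aa \otimes \xx^\aa$ in $\kk[\xx]$, every nonzero image $\ol{\xx^\aa} \in B$ is a \emph{grouplike} element, that is, $\Delta(\ol{\xx^\aa}) = \ol{\xx^\aa} \otimes \ol{\xx^\aa}$. The central input I will invoke is the standard fact that distinct grouplike elements in a coassociative coalgebra over any field are linearly independent; the usual proof---take a minimal nontrivial $\kk$-linear relation among grouplikes and compare both sides after applying $\Delta$---uses only coassociativity.

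With that in hand, I would set
$$
A := \{\aa \in \NN^n : \xx^\aa \in I\}, \qquad \aa \sim \bb \iff \xx^\aa - \xx^\bb \in I,
$$
noting that $\sim$ is an equivalence relation and that each $\sim$-class is either contained in~$A$ or disjoint from it (because $\xx^\aa \in I$ and $\xx^\aa - \xx^\bb \in I$ together force $\xx^\bb \in I$). Let $I'$ be the unital ideal generated by $\{\xx^\aa : \aa \in A\} \cup \{\xx^\aa - \xx^\bb : \aa \sim \bb\}$; clearly $I' \subseteq I$. For the reverse inclusion, given $f = \sum_\aa c_\aa \xx^\aa \in I$, I reduce modulo $I'$ by sending each supporting monomial with exponent in~$A$ to~$0$ and replacing each remaining supporting monomial by a fixed representative of its $\sim$-class, producing $\tilde f = \sum_{[\bb]} C_{[\bb]} \xx^\bb$ summed over the finitely many $\sim$-classes $[\bb]$ disjoint from~$A$ that meet the support of~$f$, with $C_{[\bb]} = \sum_{\aa \sim \bb} c_\aa$. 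Then $\tilde f \equiv f \pmod{I'}$, so $\tilde f \in I$ and its image in $B$ vanishes; but that image is a $\kk$-linear combination of the distinct nonzero grouplikes $\ol{\xx^\bb}$, forcing every $C_{[\bb]} = 0$, hence $\tilde f = 0$ and $f \in I'$.

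The main obstacle---modest as it is---lies in the coalgebra input: I must verify the linear-independence lemma for grouplike elements in a coassociative coalgebra without a counit, since the natural counit on $\kk[\xx]$ generally fails to descend to~$B$. The remaining steps---the identity used in the forward direction and the reduction modulo~$I'$---are straightforward bookkeeping.
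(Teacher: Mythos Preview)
Your proof is correct and follows essentially the same approach as the paper's: the forward direction is the identical elementary computation, and the converse in both cases rests on the linear independence of the nonzero monomial images in $\kk[\xx]/I$, established by applying the induced comultiplication to a minimal dependence relation (the paper writes this computation out inline and attributes it to Artin, whereas you package it as the standard lemma on grouplike elements). Your concern about the missing counit is well-placed but harmless, since---as you note---the minimal-relation argument uses only that $\ol\Delta$ is a well-defined linear map with $\ol\Delta(\ol{\xx^\aa}) = \ol{\xx^\aa} \otimes \ol{\xx^\aa}$; and your explicit construction of $I'$ and reduction modulo it make the final deduction (that every element of $I$ lies in the unital ideal) more transparent than the paper's somewhat terse conclusion.
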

\begin{proof}
The implication ``{\ref{eq:m1:1} $\implies$ \ref{eq:m1:2}}''
follows from the definitions, because
\begin{align*}
  \Delta(\xx^\aa) &=\xx^\aa\otimes \xx^\aa\in \kk[\xx] \otimes_\kk I + I
  \otimes_\kk \kk[\xx],
\\
  \Delta(\xx^\aa - \xx^\bb) &=\xx^\aa \otimes (\xx^\aa - \xx^\bb) +
  (\xx^\aa - \xx^\bb) \otimes \xx^\bb \in \kk[\xx] \otimes_\kk I + I
  \otimes_\kk \kk[\xx].
\end{align*}
The implication ``{\ref{eq:m1:2} $\implies$ \ref{eq:m1:1}}'' is
essentially due to Artin,
cf.~\cite[Remark~p.\thinspace{}15]{binomialideals}.  Let us recall the
argument.

If $I$ is a coideal, then $\Delta$ induces $\ol\Delta: \kk[\xx]/I \to
\kk[\xx]/I \otimes_k \kk[\xx]/I$ compatible with the canonical
projection $\pi: \kk[\xx] \to \kk[\xx]/I$.  Now suppose that $\pi(m_0)
= \sum_{i=1}^\ell \lambda_i \pi(m_i)$ is a relation among classes of
monomials with $\ell$ minimal (and hence $\lambda_i \neq 0$ for $i >
0$).~~Then
\begin{align*}%
\sum_{i>0}\lambda_i\pi(m_i)\otimes \pi(m_i)
  &= \ol\Delta\Big(\sum_{i>0}\lambda_i\pi(m_i)\Big)
\\&= \ol\Delta\big(\pi(m_0)\big) = \pi(m_0) \otimes \pi(m_0)
\\&= \Big(\sum_{i>0} \lambda_i \pi(m_i)\Big) \otimes
     \Big(\sum_{i>0} \lambda_i \pi(m_i)\Big)
\\&= \sum_{i,j>0} \lambda_i \lambda_j \pi(m_i)\otimes \pi(m_j).
\end{align*}
As $\ell$ is minimal, the $\pi(m_i)$ are independent for $i > 0$.
Hence the $\pi(m_i) \otimes \pi(m_j)$ are also independent.  But
$\lambda_i\lambda_j \neq 0$ for $i \neq j$, so $\ell = 1$ and
$\lambda_1 = \lambda_1^2$, whence $\lambda_1 = 1$.
\end{proof}

\begin{alg}\label{a:checkUnital}
Find the locus of unital fibers
\end{alg}
\begin{alglist}
\init{Input} ideal $I \subseteq S[\xx]$

\init{Output} constructible subset $Z \subseteq \spec S$ such that
	$I\kappa_\pp[\xx]$ is unital precisely for $\pp \in Z$

\begin{routinelist}{define}
\item[] $I_1 := \Delta(I) \subseteq S[\xx] \otimes_S S[\xx]$
\item[] $f_1: S[\xx]\to S[\xx] \otimes S[\xx]$ via $s \mapsto s\otimes 1$
\item[] $f_2: S[\xx]\to S[\xx] \otimes S[\xx]$ via $s \mapsto 1\otimes s$
\item[] $I_2 := f_1(I) + f_2(I)\subseteq S[\xx] \otimes S[\xx]$
\end{routinelist}

\routine{return} containment locus of $I_1$ in~$I_2$ as families
	over~$S$ (Algorithm~\ref{a:equalfibers})

\end{alglist}

%%%%%%%%%%%%%%%%%%%%%%%%%%%%%%%%%%%%%%%%%%%%%%%%%%%%%%%%%%%%%%%%%%%%%%
\subsection{Unital loci from group actions}\label{sub:structural}%%%%%

This subsection assumes $\kk$ is uncountable.

Algorithm~\ref{a:checkUnital} works over an arbitrary base.  The
Section~\ref{sec:T-act} setup, where the base is a group of ambient
automorphisms, yields a further structural property of unital~loci.

\begin{prop}\label{p:finiteunion}
The set of points $\gamma \in G$ such that $I.\gamma$ is unital is a
finite union\vspace{-.1ex}
$$%
  \bigcup_{\gamma \in U} G(I)\gamma
$$
of cosets of the stabilizer~$G(I)$.  In particular, this set is
closed.
\end{prop}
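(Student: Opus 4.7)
The plan is to combine three facts: the unital locus $U = \{\gamma \in G : I.\gamma \text{ is unital}\}$ is a union of left cosets of~$G(I)$, each such coset is closed, and only finitely many cosets can appear by Baire. The coset structure is the purely algebraic input; closedness plus Baire (Lemma~\ref{l:construct}) provides the finiteness.

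First I would check that $U$ is a union of left cosets of the stabilizer. Since the action of~$G$ on~$\kk[\xx]$ is a right action, for any $\delta \in G(I)$ and $\gamma \in U$ we have $I.(\delta\gamma) = (I.\delta).\gamma = I.\gamma$, which is unital; hence $G(I)\gamma \subseteq U$ whenever $\gamma \in U$, so $U = \bigcup_{\gamma \in U} G(I)\gamma$.

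Next I would observe that $G(I)$ is a closed subgroup of~$G$: this is exactly Lemma~\ref{l:stabilizer} with $T$ replaced by~$G$; the proof goes through verbatim since it only uses Corollary~\ref{c:constantclosed}, which holds for arbitrary groups acting on~$\AA^n$. Consequently each coset $G(I)\gamma$ is closed in~$G$, being the image of~$G(I)$ under the right-multiplication isomorphism $G \to G$ given by $\delta \mapsto \delta\gamma$.

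Finally I would invoke Algorithm~\ref{a:checkUnital} (or equivalently Proposition~\ref{p:monoidal} combined with Theorem~\ref{t:equal-locus2}) to conclude that $U$ is constructible in~$G$. Distinct left cosets of $G(I)$ are disjoint, so if infinitely many of them were needed, $U$ would be a countably infinite union of disjoint nonempty constructible subsets, contradicting Lemma~\ref{l:construct} (which applies because $\kk$ is assumed uncountable in this subsection). Therefore $U$ is a \emph{finite} union of closed cosets and hence closed. The only potential subtlety is ensuring the hypotheses of Lemma~\ref{l:construct} are met, namely that the unital locus really is constructible as a subset of~$G$ (not merely of some larger base); this is immediate from applying Algorithm~\ref{a:checkUnital} to the orbit family $\OO_G X$ over~$S = \kk[G]$, so I do not expect a serious obstacle here.
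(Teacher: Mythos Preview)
Your argument follows the paper's approach closely, but there is one genuine gap: the inference ``if infinitely many cosets were needed, $U$ would be a \emph{countably} infinite union'' is unjustified. Lemma~\ref{l:construct} only rules out countably infinite disjoint unions being constructible; it says nothing about uncountable ones, and an uncountable disjoint union of closed cosets cannot obviously be reduced to the countable case (picking a countable subfamily yields a subset of~$U$, not~$U$ itself, and constructibility is not inherited by arbitrary subsets).

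The paper supplies exactly the missing ingredient: there are only countably many monomials in~$\kk[\xx]$, hence only countably many finite sets of monomials and differences of monomials, hence only countably many unital ideals altogether. Since two elements $\gamma,\gamma'$ lie in the same coset of~$G(I)$ precisely when $I.\gamma = I.\gamma'$, the cosets appearing in the union are in bijection with the distinct unital ideals of the form~$I.\gamma$, of which there are at most countably many. With this observation inserted, Lemma~\ref{l:construct} applies and your argument goes through; the remaining steps (coset structure, closedness of~$G(I)$ via Lemma~\ref{l:stabilizer}, constructibility of~$U$ via Proposition~\ref{p:monoidal} and Theorem~\ref{t:equal-locus2}) are correct and match the paper.
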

\begin{proof}
First, $\kk[\xx]$ has only countably many monomials.  As every unital
ideal is generated by finitely many monomials and differences of
monomials, there are also only countably many unital ideals of the
form~$I.\gamma$.  The set of points $\gamma \in G$ such that
$I.\gamma$ is unital has the form
$$%
  \bigcup_{\gamma \in U} G(I)\gamma
$$
and is constructible by Theorem~\ref{t:equal-locus2} and
Proposition~\ref{p:monoidal}.  Lemma~\ref{l:construct} implies that
the union has to be finite.  Finally, stabilizers are closed by
Lemma~\ref{l:stabilizer}, so their cosets are, as well, and so are
finite unions thereof.
\end{proof}

\begin{example}\label{e:implies}
Under the componentwise action of the usual torus, any binomial prime
ideal becomes unital---and hence toric---after rescaling the variables
appropriately \cite[Corollary~2.6]{binomialideals}.  The prime
assumption here is essential, as Example~\ref{e:non-unital}~shows.
\end{example}

\begin{example}\label{e:non-unital}
It need not be possible to find a group action---or any family with
isomorphic fibers---taking a given ideal to a unital one, even if the
original is a binomial ideal.  Indeed, the ideal $I = \<u^5(u - v),
v^5(u - 2v)\>$ is binomial but $\CC[u,v]/I$ is not (abstractly)
isomorphic to a quotient of a polynomial ring $R$ by a unital
ideal~$J$.

To see why, suppose such a $J$ exists and consider $R$ of smallest
possible dimension.  Since $\CC[u,v]/I$ is supported at the origin
(both $u^{11}$ and~$v^{11}$ lie in~$I$), the quotient $R/J$ is
supported at a single point.  As $J$ is unital, the variables can be
numbered so that its support point is $\xi = (1,\dots, 1,0,\dots, 0)$.
Extending $J$ by all $x_i$ such that $x_i(\xi) = 0$ then yields a
reduced ideal \cite[Theorem~9.12]{mesoprimary}.  Hence $x_1(\xi) = 1
\implies x_1 - 1 \in J$, which contradicts minimality of~$\dim R$.
Thus $\xi = 0$.  Note that $R/J$ has tangent space of dimension~$2$.
Consequently, $R = \CC[x_1,x_2]$.  Indeed, if $\dim R > 2$ then $J$
contains a binomial of the form $x - g$, where $g$ is a monomial and
$x$ is a variable.  If $x \nmid g$ then $x - g$ eliminates~$x$,
contradicting minimality of~$\dim R$.  And if $x \mid g$ then
repeatedly replace $x$ in~$g$ to get binomials of the form $x - g_i
\in J$ with $\deg g_i \to \infty$; the fact that $R/J$ is Artinian
proves that $x \in J$, again leading to a contradiction.

Since $I$ is a complete intersection, $J = \<f_1,f_2\>$.  Since $R/J$
is supported at the origin, it equals its localization at
$\<x_1,x_2\>$.  As $J$ is unital, Nakayama's lemma produces unital
binomials $b_1, b_2$ such that $J = \<b_1,b_2\>$ as ideals of the
localization~$\tilde R$.  More generally, minimal systems of
generators of an ideal in a local ring have the same cardinality.

The local Hilbert function of $\CC[u,v]/I$ implies that (i)~neither
$b_1$ nor~$b_2$ has nonzero monomials of degree less than~$6$, and
(ii)~each of them contains a monomial of degree~$6$.

Now note that there are, up to scaling, precisely two distinct pairs
of nonzero elements $\ell_{1,i},\ell_{2,i} \in \<u,v\>/\<u,v\>^2$ for
$i = 1,2$ such that $\ell_{1,i}^a \ell_{2,i}^b = 0 \in
\<u,v\>^6/\<u,v\>^7 \subset \CC[u,v]/(I+\<u,v\>^7)$ for $a + b = 6$.
Further, the exponents satisfy $a = 1$ and $b = 5$ or $a = 5$ and $b =
1$.  Indeed, suppose $\ell_{1,i}, \ell_{2,i}$ satisfy the above
condition.  If $\ell_{1,i} \neq u$ and $\ell_{2,i} \neq u-v$ (which
provides one possible pair) it must be that
$$%
  \ell_{1,i}^a \ell_{2,i}^b=\lambda u^5(u-v)+v^5(u-2v)
$$
for some $\lambda \in \CC$.  Dividing by $u^6$ and setting $t = v/u$,
the equality above means that the polynomial $P(t) = \lambda(1 - t) +
t^5 - 2t^6$ has two roots of multiplicity $a$ and~$b$, respectively.
Swapping if necessary, assume $a \geq 3$.  Vanishing of the second
derivative forces the root to satisfy $20t^3 - 60t^4 = 0$, i.e.~$t =
0$ or $t = 1/3$.  If $t = 0$ is the root of~$P(t)$, then $\lambda = 0$
and a second pair of linear forms $\{v, u - 2v\}$ arises.  When $t =
1/3$, nonvanishing of $(d^4P(t)/dt^4)(1/3)$ means it can be a root of
multiplicity at most three, and a second root of multiplicity three
would be required, but none exists.

The next goal is to exclude cases where $b_1, b_2$ are monomials or
fail to be homogeneous.  So assume $b_1$ is a monomial or
inhomogeneous binomial.  Then $b_1$ has image $x_1^ax_2^{6-a}$ in
$\tilde R/\<x_1,x_2\>^7$; without loss of generality assume it
is~$x_1^5x_2$, providing the first pair $\{\ell_{1,1},
\ell_{2,1}\}=\{x_1,x_2\}$.  This forces $b_2$ to be homogeneous, as it
would otherwise provide a second monomial, necessarily equal
to~$x_1x_2^5$, contradicting the fact that the pairs
$\ell_{1,i},\ell_{2,i}$ are distinct for $i = 1,2$.  Hence, $b_2$ is a
homogeneous degree $6$ binomial.

Observe that for any nonzero element $h \in \<u,v\>/\<u,v\>^2
\subseteq \CC[u,v]/I + \<u,v\>^2$ there is at most a $1$-dimensional
family of elements of $\<u,v\>^5/\<u,v\>^6$ that annihilate it modulo
$I + \<u,v\>^7$.  Indeed, otherwise $h P_1 = \alpha_1 u^5(u-v) +
\beta_1 v^5(u - 2v)$ and $h P_2 = \alpha_2 u^5(u - v) + \beta_2 v^5(u
- 2v)$ for some linearly independent polynomials $P_1, P_2$ of
degree~$5$.  However, then $h$ would be a common divisor of $u^5(u -
v)$ and $v^5(u - 2v)$, which is not possible.

Hence $b_2$ cannot be divisible by $x_1$ or~$x_2$ and thus must equal
$x_1^6 - x_2^6$.  But then there is no second pair of linear forms
$\ell_{1,2}, \ell_{2,2} \in \CC[x_1,x_2]/(J+\<x_1,x_2\>^7)$ such that
$\ell_{1,2}^5 \ell_{2,2}^{\phantom 1} = 0$, because a polynomial of
the form $\lambda t + t^6 - 1$ cannot have a root of multiplicity~$3$,
so it must have more than two distinct roots.  This concludes the
proof that $b_1$ and~$b_2$ are both homogeneous and that neither is a
monomial.  But this means that $x_1 - x_2$ divides both $b_1$
and~$b_2$, which contradicts $\dim \tilde R/J = 0$.
\end{example}

% Here is an informal proof that a unital homogeneous c.i. is in fact
% a homogeneous c.i. of unital binomials/monomials (maybe this is
% obvious)
% 
% A local unital homogeneous complete intersection of codimension c is
% generated by c homogeneous unital binomials.
% 
% Proof: let $f_1,\dots,f_c$ be c.i. generators, with $f_1,\dots,f_q$
% unital binomials and $q$ as large as possible and $r_1,\dots,r_j$ be
% unital binomial generators.  Let $r_i = \sum_k g_{ik} f_k$.  If
% $g_{i{q+1}}$ has constant term (i.e.\ is invertible) we may switch
% $r_i$ and that $f_{q+1}$.  Thus we assume all terms $g_{i{q+1}}$ are
% nonconstant.  Now we present $f_{q+1} = \sum h_i r_i$ and wlog assume
% $h_1 \neq 0$.  We replace all $r_i$ for $i > 1$ by combinations
% of~$f_k$.  We move $f_{q+1}$ to the other side and we get:
% $\text{invertible} * f_{q+1} = h_1 r_1 \mod \<f_1,...,f_c\>$ with no
% $f_{q+1}$; i.e.\ we may replace $f_{q+1}$ with $r_1$.  Homogenization
% is easy

%%%%%%%%%%%%%%%%%%%%%%%%%%%%%%%%%%%%%%%%%%%%%%%%%%%%%%%%%%%%%%%%%%%%%%
\section{Detecting toric ideals and varieties}\label{s:toric}%%%%%%%%%
%%%%%%%%%%%%%%%%%%%%%%%%%%%%%%%%%%%%%%%%%%%%%%%%%%%%%%%%%%%%%%%%%%%%%%

This section presents an algorithm to check whether a given
homogeneous prime ideal defines a variety that is abstractly
isomorphic to a toric one (Section~\ref{sub:toric}).  While this could
be done using our earlier algorithms, the hypothesis that $I$ is prime
allows significant simplifications (Section~\ref{sub:faster}).

%%%%%%%%%%%%%%%%%%%%%%%%%%%%%%%%%%%%%%%%%%%%%%%%%%%%%%%%%%%%%%%%%%%%%%
\subsection{Faster algorithm for fiber containment of an irreducible family}
\label{sub:faster}

The procedure in Section~\ref{sub:toric} is made faster by the
following alternative to Algorithm~\ref{a:equalfibers} in the special
case that the ideal that is requested to be smaller (that is,~$I_1$)
has fibers that are known to be prime.  The advantage is that we
expect this algorithm to run much faster than
Algorithm~\ref{a:equalfibers}.

\begin{alg}\label{a:equalfibersIrreduc}
Compute containment locus families when one has prime fibers
\end{alg}
\begin{alglist}
\begin{initlist}{Input}
\item[] ideal $I_1 \subseteq S[\xx]$ with prime fiber $I_1
	\kappa_\pp[\xx]$ for every prime ideal $\pp \in \spec S$
\item[] ideal $I_2 \subseteq S[\xx]$
\end{initlist}

\init{Output} containment locus for $I_1$ in~$I_2$ as a constructible
	set
\pagebreak
\begin{routinelist}{define}
\item[] $X := V(I_1 + I_2)$
\item[] $B := \spec S$
\item[] $d := \dim V(I_1)$
\end{routinelist}

\routine{return} locus $B^{\geq d} \subseteq B$ where fibers have
	dimension $\geq d$ (Algorithm~\ref{a:dim} or \cite{kemper})
\end{alglist}

\begin{prop}\label{p:correct-prime-alg}
Algorithm~\ref{a:equalfibersIrreduc} is correct.
\end{prop}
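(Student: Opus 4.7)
The plan is to reduce correctness to a single classical fact: an irreducible variety of dimension $d$ has no proper closed subset of dimension $\geq d$. Primality of each fiber of $I_1$ supplies the irreducibility and lets us shuttle between algebraic and geometric containment via the Nullstellensatz.

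First I would unwind the fibers of the input. For each $\pp \in B = \spec S$ the fiber of $X = V(I_1 + I_2)$ is the scheme-theoretic intersection $X_\pp = V(I_1\kappa_\pp[\xx]) \cap V(I_2\kappa_\pp[\xx]) \subseteq \AA^n_{\kappa_\pp}$, and the hypothesis that $I_1\kappa_\pp[\xx]$ is prime makes $V(I_1)_\pp$ an irreducible variety of dimension $d_\pp$; I would take $d = \dim V(I_1)$ to be the generic value of $d_\pp$, matching the constant named in the algorithm. Next I would apply the dimension principle to $X_\pp \subseteq V(I_1)_\pp$: the inclusion forces $\dim X_\pp \leq d$, with equality if and only if $X_\pp = V(I_1)_\pp$ as sets, equivalently $V(I_1)_\pp \subseteq V(I_2)_\pp$. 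By the Nullstellensatz in $\AA^n_{\kappa_\pp}$ and a second appeal to primality (so $\sqrt{I_1\kappa_\pp[\xx]} = I_1\kappa_\pp[\xx]$), this scheme-theoretic containment is equivalent to $I_2\kappa_\pp[\xx] \subseteq I_1\kappa_\pp[\xx]$, i.e., to the containment locus in the geometric sense of Definition~\ref{d:locus}. Hence the set $B^{\geq d}$ returned by Algorithm~\ref{a:dim} (or by Kemper's deterministic alternative) coincides with the claimed output.

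The main obstacle I anticipate is the dimension bookkeeping rather than the geometric core. Prime fibers are automatically irreducible but need not have constant dimension as $\pp$ varies, so some care is needed to justify that a single scalar $d$ really captures $d_\pp$ wherever the comparison matters. I would address this either by restricting attention to the (dense open) locus in $B$ where $d_\pp$ attains the generic value---outside of which $X_\pp \subseteq V(I_1)_\pp$ has dimension strictly less than $d$ and is correctly excluded from $B^{\geq d}$ anyway---or by stratifying $B$ by fiber dimension and repeating the argument on each stratum. With the dimension of $V(I_1)_\pp$ pinned to $d$, the equivalence ``$\dim X_\pp \geq d \iff V(I_2)_\pp \subseteq V(I_1)_\pp$'' is the one-line heart of the proof.
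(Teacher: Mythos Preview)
Your argument is essentially the paper's: the authors' proof is the single sentence that in a finitely generated $\kk$-algebra $R$, if an ideal $I$ contains a prime $J$ then $I=J$ if and only if $\dim R/I=\dim R/J$, applied with $R=\kappa_\pp[\xx]$, $J=I_1\kappa_\pp[\xx]$, and $I=(I_1+I_2)\kappa_\pp[\xx]$. Your geometric phrasing via irreducibility of $V(I_1)_\pp$ is the same fact; the Nullstellensatz detour is unnecessary, since $I_1+I_2=I_1$ is literally equivalent to $I_2\subseteq I_1$ and the dimension criterion for that equality is purely algebraic in the domain $\kappa_\pp[\xx]/I_1\kappa_\pp[\xx]$.

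The one place where you go beyond the paper is the fiber-dimension discussion, and there your first proposed fix is not right. Outside the locus where $d_\pp$ equals the generic value, you assert that $\dim X_\pp<d$ and hence such $\pp$ are ``correctly excluded.'' But fiber dimension of $V(I_1)\to B$ can jump \emph{up} at special points (take $S=\kk[t]$, $I_1=\langle tx\rangle$: prime fibers of dimensions $0$ and $1$), so at such $\pp$ one may have $\dim X_\pp\in\{d,\dots,d_\pp\}$ regardless of whether the containment holds, and $B^{\geq d}$ can both wrongly include and wrongly exclude points. Your second option---stratify $B$ by $d_\pp$ and run the comparison on each stratum with its own threshold---does repair this, at the cost of modifying the algorithm. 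The paper's proof does not address this point either; in the paper's applications the fibers of $I_1$ are $G$-translates of a fixed prime ideal and hence equidimensional, so a single $d$ suffices.
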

\begin{proof}
In any finitely generated commutative $\kk$-algebra $R$ (we apply it
to $\kappa_\pp[\xx]$), if one ideal~$I$ contains a prime ideal~$J$,
then $I = J$ if and only if $\dim(R/I) = \dim(R/J)$.
\end{proof}

\begin{example}\label{e:macaulay2}
This is a Macaulay2 \cite{macaulay2} demonstration of testing whether
there exists $\gamma \in G = (\CC,+)$ such that $I.\gamma$ is toric,
where $I = \<xy + 2y^2 - 1\>$ and $(\CC,+)$ acts on~$\CC^2$ by
$\gamma.(x,y) = (x + \gamma y, y)$.  The code applies
Algorithm~\ref{a:checkUnital}, relying on the fast
Algorithm~\ref{a:equalfibersIrreduc} instead of
Algorithm~\ref{a:equalfibers} to compute containment.  Moreover, in
view of Remark~\ref{r:inverse} it works with the inverse group action,
so as to obtain the ideal~$I$ as the image of a map.

%vspace{1.5ex}
\begin{verbatim}
R = QQ[x,y];
I = ideal (x*y+2*y^2-1);
GxR = QQ[a,x,y];
GxRxR = QQ[aa,xx1,yy1,xx2,yy2];
alpha = map(GxR,R,{x-a*y, y}); -- action by the inverse
IdotDelta = map(GxRxR,GxR,{aa,xx1*xx2,yy1*yy2});
I1 = IdotDelta(alpha(I));
idf1 = map(GxRxR, GxR,{aa,xx1,yy1});
idf2 = map(GxRxR, GxR,{aa,xx2,yy2});
I2 = idf1(alpha(I))+idf2(alpha(I));
I3 = I1+I2;
BPrimn = ideal(sub(0,GxRxR)), BPrimo = ideal(sub(1,GxRxR));
while (not ((radical BPrimn)==BPrimo))
   do {
       L = 0;
       for i from 1 to (2*(dim I))
       do{
          L = L+ideal(sub(random(1,GxRxR),{aa=>random QQ}))
         };
       I4 = I3+L;
       BPrimo = BPrimn,
       BPrimn = radical(BPrimo+eliminate(I4,{xx1,yy1,xx2,yy2}));
      }
M = map(GxR,GxRxR,{a,x,y,0,0});
eliminate(M(BPrimn)+alpha(I),a);
eliminate(M(BPrimn)+alpha(I),{x,y})
\end{verbatim}\vspace{1.5ex}
\end{example}

%%%%%%%%%%%%%%%%%%%%%%%%%%%%%%%%%%%%%%%%%%%%%%%%%%%%%%%%%%%%%%%%%%%%%%
\subsection{Toric varieties}\label{sub:toric}%%%%%%%%%%%%%%%%%%%%%%%%%

This section shows how to detect whether a projective variety is toric
without any prespecified group or other family of ambient
automorphisms.

\begin{alg}\label{a:checkProjToric}
Decide whether a normal projective variety is abstractly toric
\end{alg}
\begin{alglist}
\init{Input} normal projective variety $X \subseteq \PP^n$

\init{Output} a projective toric embedding of~$X$ if it is toric,
	else \procedure{false}

\begin{routinelist}{compute}
\item[] a projectively normal Veronese embedding of~$X\!$
	\cite[Exercise~II.5.14]{hartshorne}
\item[] a homogeneous prime ideal $I \subseteq S$ such that $X =
	\Proj(S/I)$
\item[] $\gamma \in \GL_N$ such that $I.\gamma^{-1}$ is toric
	(Algorithm~\ref{a:checkUnital})
\end{routinelist}

\routine{return} $\gamma.X$ or \procedure{false}, accordingly
\end{alglist}

\begin{thm}\label{t:checkProjToric}
Algorithm~\ref{a:checkProjToric} is correct.
\end{thm}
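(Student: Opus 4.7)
The plan is to verify both directions of the correctness statement. For the easy direction, suppose Algorithm~\ref{a:checkProjToric} returns $\gamma.X$. Then by correctness of Algorithm~\ref{a:checkUnital}, the ideal $I.\gamma^{-1}$ is unital; since $I$ is prime and $\gamma \in \GL_N$ acts on $\kk[\xx]$ by a $\kk$-algebra automorphism, $I.\gamma^{-1}$ remains prime, hence is toric by Definition~\ref{d:binomial}. Therefore $X \cong \gamma.X$ is abstractly toric.

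The substantive direction is the converse: if $X$ is abstractly isomorphic to a normal projective toric variety with torus $T$, I need to produce $\gamma \in \GL_N$ such that $I.\gamma^{-1}$ is unital, so that Algorithm~\ref{a:checkUnital} will detect it. Let $L = \OO_{\PP^n}(d)|_X$ be the very ample line bundle giving the projectively normal Veronese embedding chosen by the algorithm. The first step is to assume $d$ is large enough that, in addition to projective normality (Hartshorne, Exercise~II.5.14), the bundle $L$ is $T$-linearizable: any ample line bundle on a normal projective $T$-variety admits a $T$-linearization after passage to a sufficiently high power, and both projective normality and linearizability persist under further Veronese embeddings. Next, I would choose a basis of $H^0(X, L)$ consisting of $T$-weight vectors, which identifies $\PP^N$ with projective space on which $T$ acts diagonally and makes the embedding $X \hookrightarrow \PP^N$ equivariant.

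In this weight basis, the homogeneous coordinate ring of $X$ is identified with the section ring $\bigoplus_{m \geq 0} H^0(X, L^m)$, which by projective normality and the toric hypothesis is an affine semigroup algebra $\kk[S]$; consequently its defining ideal is the classical toric ideal, generated by unital binomials $\xx^u - \xx^v$ with $Au = Av$. The change-of-basis matrix $\gamma \in \GL_N$ between the algorithm's coordinates and the chosen weight basis then satisfies that $I.\gamma^{-1}$ is unital, and Algorithm~\ref{a:checkUnital} finds it. The main obstacle will be arranging projective normality and $T$-linearizability simultaneously via a single Veronese degree, which requires invoking standard linearization results (going back to Mumford's GIT) together with the observation that both conditions are stable under passing to higher Veroneses; if the algorithm's first choice of $d$ is inadequate for the hidden toric structure, it must iterate to a multiple of~$d$.
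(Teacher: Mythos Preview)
Your overall structure is right, and your treatment of the easy direction is more explicit than the paper's. But there is a genuine gap in the converse.

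The algorithm fixes one Veronese degree~$d$ as soon as projective normality holds; it has no access to the hidden torus~$T$ and never iterates further. Your argument, however, asks for $d$ large enough that $L = \OO_{\PP^n}(d)|_X$ is $T$-linearizable, invoking the general fact that an ample bundle on a normal $T$-variety becomes linearizable after passing to a power. That fact is too weak here: it does not guarantee that the specific~$d$ the algorithm has already committed to suffices. Your proposed remedy (``it must iterate to a multiple of~$d$'') would change the algorithm, not prove it correct as stated.

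The paper closes this gap with a stronger, toric-specific input: on a \emph{normal toric} variety every divisor is linearly equivalent to a torus-invariant one \cite[Theorem~4.2.1]{CLS-toricVarieties}. Hence whatever very ample~$L$ the algorithm's Veronese step produces is already linearly equivalent to a $T$-invariant divisor~$L'$; no passage to powers is needed. From there the argument is essentially yours: $H^0(X,L) \cong H^0(X,L')$ has a monomial basis, projective normality makes $\Gamma(\PP^N,\OO(1)) \to \Gamma(X,L)$ surjective, and the change of basis to monomials is realized by some $\gamma \in \GL_N$, so $I.\gamma^{-1}$ is toric. In short, replace the general $T$-linearization statement with the toric divisor-class fact and drop the iteration clause, and your proof goes through.
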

\begin{proof}
The re-embedding can be done by attempting successively higher
Veronese maps and checking whether each is projectively normal.  The
cited source guarantees that this procedure terminates.

It remains only to show that if $X$ is toric, then there really exists
$\gamma \in \GL_N$ such that $\gamma.X$ is equivariantly embedded.
The embedding of $X$ distinguishes a very ample divisor $L$ on $X$.
If $X$ is toric, then $L$ is equivalent to a toric divisor~$L'$ by
\cite[Theorem~4.2.1]{CLS-toricVarieties}.  Therefore, the projectively
normal embedding yields a surjection $\Gamma(\PP^N,\mathcal{O}(1)) \to
\Gamma(X, L)$.  In particular, $X$ is toric if and only if there
exists an automorphism of $\PP^N$ under which $I(X)$ goes to a toric
ideal.  But all automorphisms of $\PP^N$ are (projectively) linear, so
the desired one is represented by some matrix $\gamma \in \GL_N$.
\end{proof}

\begin{remark}\label{r:toric}
To check if a variety $X$ is toric it is essential that $X$ be
projective.  For example, it is an open problem to decide whether the
affine variety defined by the ideal $\<x + x^2y + z^2 + t^3\> \subset
\CC[x,y,z,t,w]$ is isomorphic to~$\AA^4$
\cite[Remark~5.3]{mm-topics-toric}.
\end{remark}

%%%%%%%%%%%%%%%%%%%%%%%%%%%%%%%%%%%%%%%%%%%%%%%%%%%%%%%%%%%%%%%%%%%%%%
\section{Conclusion}%%%%%%%%%%%%%%%%%%%%%%%%%%%%%%%%%%%%%%%%%%%%%%%%%%
%%%%%%%%%%%%%%%%%%%%%%%%%%%%%%%%%%%%%%%%%%%%%%%%%%%%%%%%%%%%%%%%%%%%%%

In retrospect, many of our algorithms apply not only to a group of
automorphisms of an affine space but to an arbitrary family of
transformations.  To be precise, fix an arbitrary morphism $\alpha: Y
\times \AA^n \to \AA^n$, thought of as a family of maps $\AA^n \to
\AA^n$ parametrized by~$Y$.  For a $\kk$-valued closed point $\eta \in
Y$ denote by $\alpha_\eta: \AA^n \to \AA^n$ the morphism obtained by
composition of the isomorphism $\AA^n \to \{\eta\} \times \AA^n$ and
(the restriction of) $\alpha$.  Given an affine variety $X \subseteq
\AA^n$ one may ask for the locus of points $\eta \in Y$ such that
$\alpha^{-1}_\eta(X)$ is defined by a unital ideal.  In the group
action setting, where $Y = G$ is a group and~$\alpha$ is an action
(Section~\ref{sub:actions}), working with images and preimages are
more or less equivalent: they amount to taking orbits for $\gamma$
or~$\gamma^{-1}$.  But in this more general setting, working with
preimages means computing inverse images of subschemes (images of
ideals), which is trivial, instead of computing images of schemes
(kernels of ring maps), which is a hard problem known as
implicitization.  Furthermore, the inverse image is closed, whereas
for images of morphisms the global closure may be not compatible with
closure fiberwise, which creates additional problems.

\begin{remark}\label{r:extend}
Even using preimages of subschemes instead of images, extending our
algorithms to this more general setting requires special attention.
For example, although two families over~$Y$ can still be compared as
in Algorithm~\ref{a:checkUnital}, the dimension argument in
Section~\ref{sub:faster} no longer necessarily applies.
\end{remark}

\begin{remark}\label{r:binomial}
In contrast, the methods to test binomiality in
Section~\ref{sub:alg-binom} adapt verbatim to the case of arbitrary
maps, as they only rely on comprehensive Gr\"obner bases.
\end{remark}

\begin{remark}\label{r:monomial}
It is similarly easy to check if an ideal is generated by monomials in
a similar way to Algorithm~\ref{a:binomial}.  Indeed, for each $U_i$
one only needs to check if it is possible for all coefficients of
nonleading monomials to vanish.  Alternatively, note that an ideal $I
\subseteq \kk[x_1,\dots, x_n]$ is monomial if and only if it is stable
under the whole torus~$T = (\kk^*)^n$ and apply
Algorithm~\ref{a:group}.
\end{remark}

In view of our results, we find the following three problems of
particular importance.

\begin{prob}
Is the problem of determining if an affine variety is affine space
decidable? Equivalently, is it decidable to test if a finitely
generated $\kk$-algebra is a polynomial ring?
\end{prob}

\begin{prob}
Is the problem of determining if a projective (nonnormal) variety
admits a torus action with a dense orbit decidable?
\end{prob}

\begin{prob}
Is the problem of determining if a given affine variety is toric
decidable?
\end{prob}

The last problem may be asked both for normal and arbitrary affine
varieties.

%%%%%%%%%%%%%%%%%%%%%%%%%%%%%%%%%%%%%%%%%%%%%%%%%%%%%%%%%%%%%%%%%%%%%%
%%%%%%%%%%%%%%%%%%%%%%%%%%%%%%%%%%%%%%%%%%%%%%%%%
%%%%%%%%%%%%%%%%%%%%%%%%%%%%%%%%%%%%%%%%%%%%%%%%%%%%%%%%%%%%%%%%%%%%%%

%%%%%%%%%%%%%%%%%%%%%%%%%%%%%%%%%%%%%%%%%%%%%%%%%%%%%%%%%%%%%%%%%%%%%%
\end{document}